\newtheorem{satz}{Theorem}
\newtheorem{theorem}[satz]{Theorem}
\newtheorem{lemma}[satz]{Lemma}
\newtheorem{definition}[satz]{Definition}
\newtheorem{problem}[satz]{Problem}
\newtheorem{conjecture}[satz]{Conjecture}
\newtheorem{claim}[satz]{Claim}
\newcommand{\qed}{{} \hfill \mbox{$\Box$}}
\def\({\big (}
\def\){\big )}
\def\_phi{\varphi}
\title{On the structure of pointsets with many collinear triples}
\author
{
J\'{o}zsef Solymosi
\thanks{Department of Mathematics,
The University of British Columbia, 
Vancouver, Canada and Obuda University, Budapest, Hungary.
Email: solymosi@math.ubc.ca}}
\date{} 
\begin{document}
\maketitle

\begin{abstract}
    It is conjectured that if a finite set of points in the plane contains many collinear triples, then part of the set has a structure. We will show that under some combinatorial conditions, such pointsets have special configurations of triples, proving a case of Elekes' conjecture. Using the techniques applied in the proof, we show a density version of Jamison's theorem. If the number of distinct directions between many pairs of points of a point set in a convex position is small, then many points are on a conic.
\end{abstract}

\section{Introduction}
In this paper, we investigate planar pointsets spanning many collinear triples. If three $n$-element pointsets are well separated (mutually avoiding) and there are $cn^2$ collinear triples, one point from each, it contains a Pappus configuration. Elekes conjectured it for general pointsets containing many collinear triples. We prove a special case in Theorem \ref{main1}. If one of the three separated pointsets consists of collinear points, then a strong structural result is proved in Lemma \ref{main3}: a positive fraction of the remaining points are on a conic. As a corollary, we prove another conjecture of Elekes in Theorem \ref{cor:elekes}. If the number of distinct directions between many pairs of points of a point set in a convex position is small, then many points are on a conic.

It is a classical and essential problem in discrete geometry to understand the structure of finite planar pointsets defining many lines containing at least three points of this set of points.
An excellent survey article on many related problems is written by Borwein and Moser \cite{BM}, and several open problems are mentioned in the problem book of Brass, Moser, and Pach (Chapter 7 in \cite{BMP}).
Solving a famous conjecture of Dirac and Motzkin, the following statement was proved by Green and Tao in 2013 \cite{GT}. 

\begin{theorem} Suppose that $P$ is a set of
$n$ points in the plane. Then there are at most $\lfloor \frac{n(n-3)}{6}\rfloor+1$ lines that contain at least three points of $P,$ provided $n$ is large enough.
\end{theorem}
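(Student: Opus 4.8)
\medskip
The plan is to reduce the statement to the study of point sets with few ordinary lines. Write $t_k$ for the number of lines meeting $P$ in exactly $k$ points and put $L=\sum_{k\ge 3}t_k$, the quantity to be bounded. Counting pairs of points through each line gives $\sum_{k\ge 2}\binom{k}{2}t_k=\binom{n}{2}$, hence $t_2\le\binom{n}{2}-3L$; consequently, if $t_2\ge n+1$ then $L\le\tfrac{1}{3}\big(\binom{n}{2}-n-1\big)<\tfrac{n(n-3)}{6}\le\lf\tfrac{n(n-3)}{6}\rf+1$ and there is nothing to prove. So we may assume that $P$ spans at most $n$ ordinary lines.

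For such $P$ I would invoke the Green--Tao structure theorem on sets with few ordinary lines: since $t_2\le n$ and $n$ is large, all but a bounded number of the points of $P$ lie on a real plane curve of degree at most three, of one of the following types: an irreducible cubic; an irreducible conic together with a line; a union of at most three lines; or the one-dimensional B\"or\"oczky-type configuration (which has no three collinear points, so $L=1$ in that case). It then suffices to bound $L$ curve by curve.

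The decisive case is an irreducible cubic $C$. Its smooth points carry an abelian group law via the chord--tangent construction with identity at an inflection point, and three smooth points of $C$ are collinear exactly when they sum to the identity; over $\mathbb{R}$ the group in question is one of $\mathbb{R}/\mathbb{Z}$, $\mathbb{R}/\mathbb{Z}\times\mathbb{Z}/2\mathbb{Z}$, $(\mathbb{R},+)$ or $(\mathbb{R}^{\times},\cdot)$. Hence, apart from the bounded exceptional set and the at most three points of $C$ lying at infinity or at a singularity, $L$ is at most the number of unordered triples $\{a,b,c\}$ of distinct elements of an $N$-element subset $A$ of such a group with $a+b+c=0$. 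Since $a,b$ determine $c=-(a+b)$, this gives the crude estimate $L\le N^2/6$; the sharp value $\lf\tfrac{N(N-3)}{6}\rf+1$ I would extract by an additive-combinatorial stability argument for these one-dimensional groups, showing that equality forces $A$ to be a coset of a finite cyclic subgroup --- precisely the Burr--Gr\"unbaum--Sloane configurations coming from the $n$-torsion of an elliptic curve, for which a direct count gives exactly $\lf\tfrac{n(n-3)}{6}\rf+1$ collinear triples. In the reducible cases the incidence pattern is governed by the groups $(\mathbb{R},+)$, $(\mathbb{R}^{\times},\cdot)$ and the like, which admit no large finite subgroup, and a direct estimate there yields $L<\tfrac{n(n-3)}{6}$.

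The main obstacles are two. The first is the structure theorem itself, which is the deep ingredient and is applied here with structural constant of order $1$, so that the exceptional set off the curve is of bounded size. The second, and the more delicate, is disposing of that exceptional set: a point off $C$ may lie on $\Theta(n)$ chords of $C$, far too many simply to delete and reattach, so one needs a stability/perturbation analysis showing that in the near-extremal regime every exceptional point either lies on $C$ or meets only $O(1)$ three-rich lines. Feeding this rigidity into the exact optimum of the group-theoretic count pins down the bound $\lf\tfrac{n(n-3)}{6}\rf+1$, and the classical Burr--Gr\"unbaum--Sloane constructions show that it is attained. \qed
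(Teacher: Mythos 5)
This statement is Theorem 1 of the paper, quoted verbatim from Green and Tao; the paper does not prove it --- it enters only as background for the discussion of structure theorems --- so there is no in-paper argument to compare yours against. What you have written is, correctly, an outline of Green and Tao's own proof: the opening reduction is right and complete (from $\sum_{k\ge 2}\binom{k}{2}t_k=\binom{n}{2}$ one gets $3L\le \binom{n}{2}-t_2$, so one may assume $t_2\le n$ and invoke the structure theorem with $K=1$), and identifying the group law on the cubic as the mechanism behind the count of collinear triples is the right next move.

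As a proof, however, the proposal has two genuine gaps, which you name yourself without closing. First, the passage from the crude bound $L\le N^2/6$ to the exact value $\lfloor n(n-3)/6\rfloor+1$ is not a routine appeal to ``stability'': one must show that a subset of $\mathbb{R}/\mathbb{Z}$ or $\mathbb{R}/\mathbb{Z}\times\mathbb{Z}/2\mathbb{Z}$ nearly maximizing the number of zero-sum triples is essentially a coset of a finite subgroup, and then redo the count exactly, keeping track of tangential triples (the case $a=b$, $c=-2a$) and of points of the cubic at infinity; this occupies a substantial portion of Green--Tao's paper. Second, and more seriously, the treatment of the $O(1)$ exceptional points off the cubic is the technical heart of the matter: as you observe, such a point can lie on $\Theta(n)$ three-rich lines, so it cannot simply be deleted and reinstated, and the rigidity statement you need (that in the near-extremal regime every point of $P$ lies on the curve, since otherwise it would generate $\gg n$ ordinary lines, contradicting $t_2\le n$) is asserted rather than proved. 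There is also a factual slip: the B\"or\"oczky-type configurations (points on a conic together with collinear direction points) certainly contain collinear triples --- on the order of $n^2/8$ three-rich lines --- so the parenthetical claim that $L=1$ in that case is wrong, and that case also requires a count. In short, your roadmap matches the actual Green--Tao argument, but the two steps you flag as ``obstacles'' are precisely the content of their proof, so what you have is a correct summary of the strategy rather than a proof.
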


For a given set of points, $P,$ a line is determined by $P$ if it contains at least two points of $P.$ If such a line has exactly two points, it is called  an {\em ordinary} line.
Green and Tao proved a strong structure theorem which states that if $P$ has at most $Kn$ ordinary lines, then all but $O(K)$ points of $P$ lie on a cubic curve if $n$ is sufficiently large depending on $K$ (in this result $K$ is a constant or a very slowly growing function of $n$). Unfortunately, almost nothing is known about the structure of $P$ when there are $cn^2$ lines with at least three points of $P,$ where $0<c<1/6.$ Elekes conjectured that here, under this weaker condition, ten or more points of $P$ lie on a (possibly
degenerate) cubic, provided that $n\geq n_0(c)$ (see, e.g. Conjecture 2.1 in \cite{ESz}). There are some structural results for $n$-element pointsets with $cn^2$ collinear triples, like in \cite{So1} and \cite{DGOS}, but these are not enough to prove Elekes' conjecture on ``ten points on a cubic''. 
There is another related conjecture (problem) of Elekes (see in \cite{Prob} and in \cite{Vera}), which was the main inspiration for this work.

\begin{problem}[Elekes \cite{Prob}]
For fixed $c > 0,$ suppose that the edges of a graph $G$ with $n$ vertices and $cn^2$ edges are well-coloured using $n$ colours (i.e., no two edges of the same colour are incident upon a common vertex). If $n$ is sufficiently large, must $G$ contain a six-cycle with opposite edges having the same colour? 
\end{problem}

Elekes gave another formulation of the problem in \cite{Prob}. If the colours of the edges are represented by extra vertices, each added to the edges with its colour, then we can state the problem (now stated as a conjecture) in terms of 3-uniform hypergraphs. A 3-uniform hypergraph is {\em linear} if any two edges have at most one vertex common. 
In a 3-uniform hypergraph, six edges are called a {\em tic-tac-toe} if they intersect each other like the rows and columns of a $(3\times 3)$ tic-tac-toe board. 

\begin{figure}[H]
\centering
\includegraphics[scale=.3]{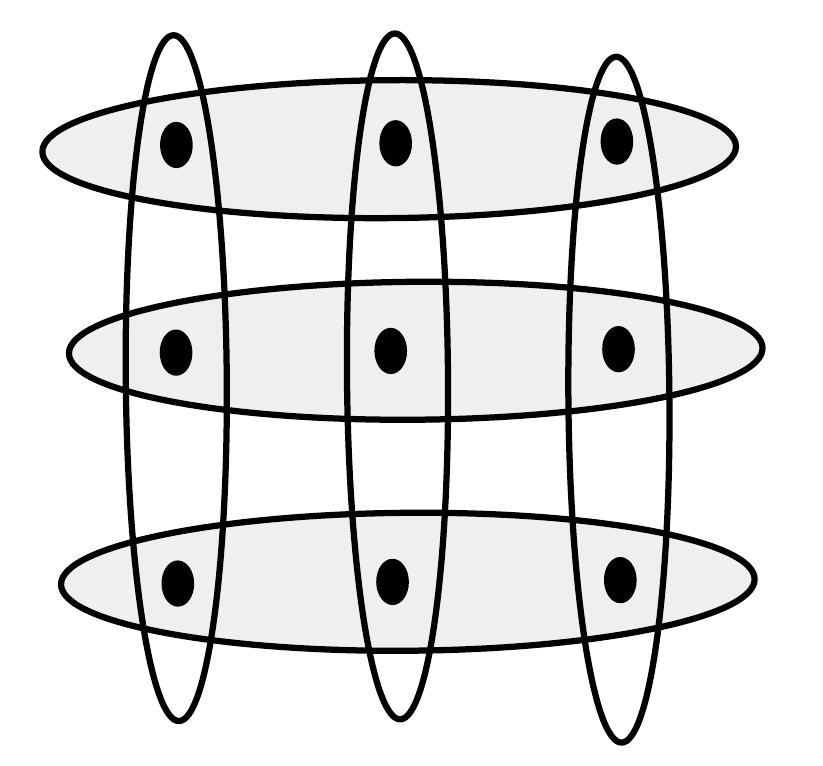}
\caption{A tic-tac-toe hypergraph on nine vertices with six edges}
\label{fig:tictac}
\end{figure}

It follows from the converse to Pascal's theorem, called Braikenridge–Maclaurin theorem \cite{MBM} that in a tic-tac-toe configuration of collinear triples in the plane if three pairwise independent\footnote{Two vertices are independent if there is no edge containing both.} points are collinear, then the remaining six points are on a conic (Figure \ref{fig:Pascal}).
A 3-uniform linear hypergraph on $n$ vertices has at most $\lfloor\frac{n(n-1)}{6}\rfloor$ edges. Elekes conjectured that if it contains no tic-tac-toe, then it should be sparse.

\begin{conjecture}\label{Pascal}
If a 3-uniform linear hypergraph on $n$ vertices contains no tic-tac-toe, it is sparse; it has $o(n^2)$ edges. (For any $\varepsilon>0$ there is an $n_0=n_0(\varepsilon)$ such that if $n\geq n_0$ then the number of edges is at most $\varepsilon n^2.$)
\end{conjecture}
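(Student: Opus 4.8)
\medskip
\noindent\textbf{A proof strategy for Conjecture~\ref{Pascal}.} I would argue the contrapositive in its finitary form: for every $\eps>0$ there is an $n_0(\eps)$ so that every linear $3$-uniform hypergraph $\mathcal H$ on $n\ge n_0$ vertices with at least $\eps n^2$ edges contains a tic-tac-toe. The first step is to record two equivalent descriptions of a tic-tac-toe, each of which drives one of the two natural attacks. Combinatorially, it is a pair of \emph{orthogonal parallel classes}: three pairwise disjoint edges $R_1,R_2,R_3$ (the rows) and three pairwise disjoint edges $C_1,C_2,C_3$ (the columns) with $R_i\cap C_j\neq\varnothing$ for all $i,j$; linearity makes each $R_i\cap C_j$ a single point, and disjointness within each class forces these nine points to be distinct, so the configuration is automatically non-degenerate. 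In the edge-coloured graph model of Elekes' Problem (the $n$ colour classes being matchings $M_1,\dots,M_n$, i.e.\ partial involutions $\pi_1,\dots,\pi_n$ of the vertex set), the same object is a $6$-cycle with colour pattern $abcabc$, which exists exactly when the partial injection $\sigma_{abc}=\pi_c\pi_b\pi_a$ has a $2$-cycle through nine distinct vertices.

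My primary line of attack is the regularity method, in direct analogy with how the Ruzsa--Szemer\'edi $(6,3)$-theorem handles the triangle. I would apply (weak) hypergraph regularity to $\mathcal H$ --- or, exploiting linearity, pass to the point-versus-edge incidence picture and regularize there --- to obtain a bounded reduced structure whose dense, regular pieces still carry an $\Omega(\eps)$ fraction of the edges. The goal is then a \emph{counting/embedding lemma}: a single suitably crossing $3\times 3$ pattern among the reduced clusters should blow up to $\Omega(n^9)$ genuine tic-tac-toes, and one honest copy already contradicts tic-tac-toe-freeness. Supersaturation makes the quantitative bookkeeping routine, so the entire content lies in producing the reduced $3\times3$ pattern, i.e.\ in locating two families of three regular pairs, pairwise crossing with densities bounded below and arranged as two orthogonal parallel classes.

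The involution reformulation suggests a complementary, more hands-on route that also exposes what must be controlled. The number of closed $abcabc$ $6$-walks equals $\sum_{a,b,c}\lvert\mathrm{Fix}\,\sigma_{abc}^2\rvert$, and summing the number of proper colour-$(a,b,c)$ $3$-walks over all ordered triples produces a main term of order $n^4$, so the ``expected'' number of closed $6$-walks is large and positive. The trouble is that $\mathrm{Fix}\,\sigma_{abc}^2$ also absorbs the fixed points of $\sigma_{abc}$ together with every vertex collapse, and a first- or second-moment computation cannot by itself guarantee that the honest $2$-cycles survive: a single long cycle of $\sigma_{abc}$ contributes to the walk count yet has no $2$-cycle at all. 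Making this route succeed forces one to bound the degenerate contribution, which is where the linearity hypothesis and the bound $\lvert M_c\rvert\le n/2$ on each colour class must be spent.

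The crux --- and the reason the statement is still a conjecture --- is that a tic-tac-toe is itself \emph{triangle-free}: among its six edges no three meet pairwise, since two rows never meet, two columns never meet, and a row meets a column only once. Consequently, unlike in the $(6,3)$-theorem, density does not hand me triangle-freeness; a dense linear hypergraph, though compelled by Ruzsa--Szemer\'edi to contain an abundance of triangles, is under no obvious compulsion to contain the rigidly \emph{aligned} pair of orthogonal parallel classes. In the regularity attack this manifests as the need to exclude that all the mass hides in grid-free (triangle-concentrated, or single-parallel-class) arrangements, and in the counting attack as the need to peel the true $2$-cycles away from the degeneracies; both amount to upgrading a mere abundance of crossing edges into a coherent $3\times3$ grid. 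I expect this alignment step to be the main obstacle, and it is precisely the point at which a genuinely new idea --- rather than a direct transcription of the triangle-removal argument or a naive moment estimate --- is required to close the proof.
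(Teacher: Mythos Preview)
Your proposal is not a proof and cannot be completed into one, because Conjecture~\ref{Pascal} is \emph{false}. As the paper states immediately after formulating the conjecture, Gishboliner and Shapira~\cite{GS} constructed linear $3$-uniform hypergraphs on $n$ vertices with $\sim n^{2}/16$ edges and no tic-tac-toe. So the quantity you are trying to bound by $o(n^{2})$ is in fact $\Theta(n^{2})$ in the worst case, and every step of your strategy that would purport to force a $3\times 3$ grid from density alone must fail somewhere.

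To your credit, you correctly identified the exact obstruction: a tic-tac-toe is triangle-free, so neither triangle removal nor a straightforward regularity/counting argument can manufacture the required pair of orthogonal parallel classes from density alone. What you called ``the alignment step'' is not merely the hard part --- it is impossible in general, and the Gishboliner--Shapira construction is precisely a dense linear triple system in which no such alignment occurs. The paper does not prove Conjecture~\ref{Pascal}; instead it establishes the conclusion only under strong additional geometric hypotheses (collinear triples among mutually avoiding planar point sets), where an ordering of the points supplies the extra structure that is unavailable in the abstract hypergraph setting.
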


This conjecture was refuted by Gishboliner and Shapira, giving a nice construction for 3-uniform hypergraphs on $n$ vertices with $\sim\frac{n^2}{16}$ edges without tic-tac-toe \cite{GS}.
Earlier, F\"uredi and Ruszink\'o even conjectured that there are 3-uniform hypergraphs on $n$ vertices with $\sim\frac{n^2}{6}$ edges (almost Steiner triple systems) without a tic-tac-toe \cite{FR}. This conjecture is still open.

\begin{figure}[H]
\centering
\includegraphics[width=7cm, angle=-90]{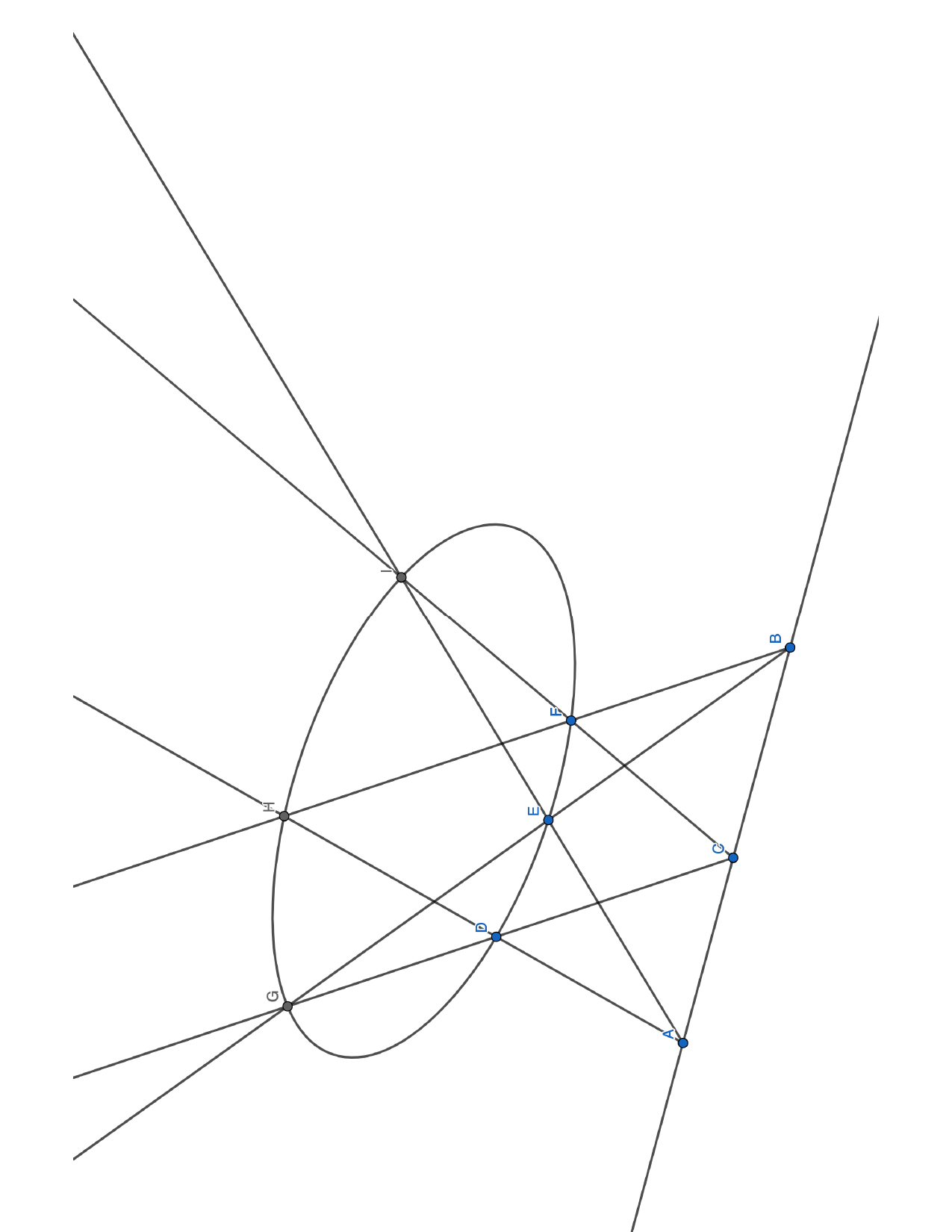}
\caption{If $A, B,C$ are collinear in a tic-tac-toe configuration of collinear triples, then the other six points are on a conic}
\label{fig:Pascal}
\end{figure}

In this paper, we will prove Conjecture \ref{Pascal} when the edges are collinear triples in some pointsets. We show that if the edges of a 3-uniform hypergraph are defined by collinear triples from three sets of points, there are vertices spanning many edges. 
In the last section, we prove that if one of the three sets consists of collinear points under some combinatorial conditions, then a large fraction of the other two sets are on a conic. As noted in \cite{E_SP} (see the quote after Conjecture \ref{Elekes6} below), without extra conditions, this statement would not hold. For example, taking a projective image of an integer $\lfloor\sqrt{n}\rfloor\times\lfloor\sqrt{n}\rfloor$ square grid with the line of infinity,  there are $cn^2$ collinear triples with two points in the grid and one on the line at infinity (the points of the directions), while the grid contains at most $n^{1/4}$ points of a parabola and $n^{o(1)}$ points of other conics\footnote{This claim can be recovered from multiple sources, see, e.g. Lemma 5. in \cite{KS}.} (Figure \ref{fig:Grid}). 

\begin{figure}[h!]
\centering
\includegraphics[width=7cm]{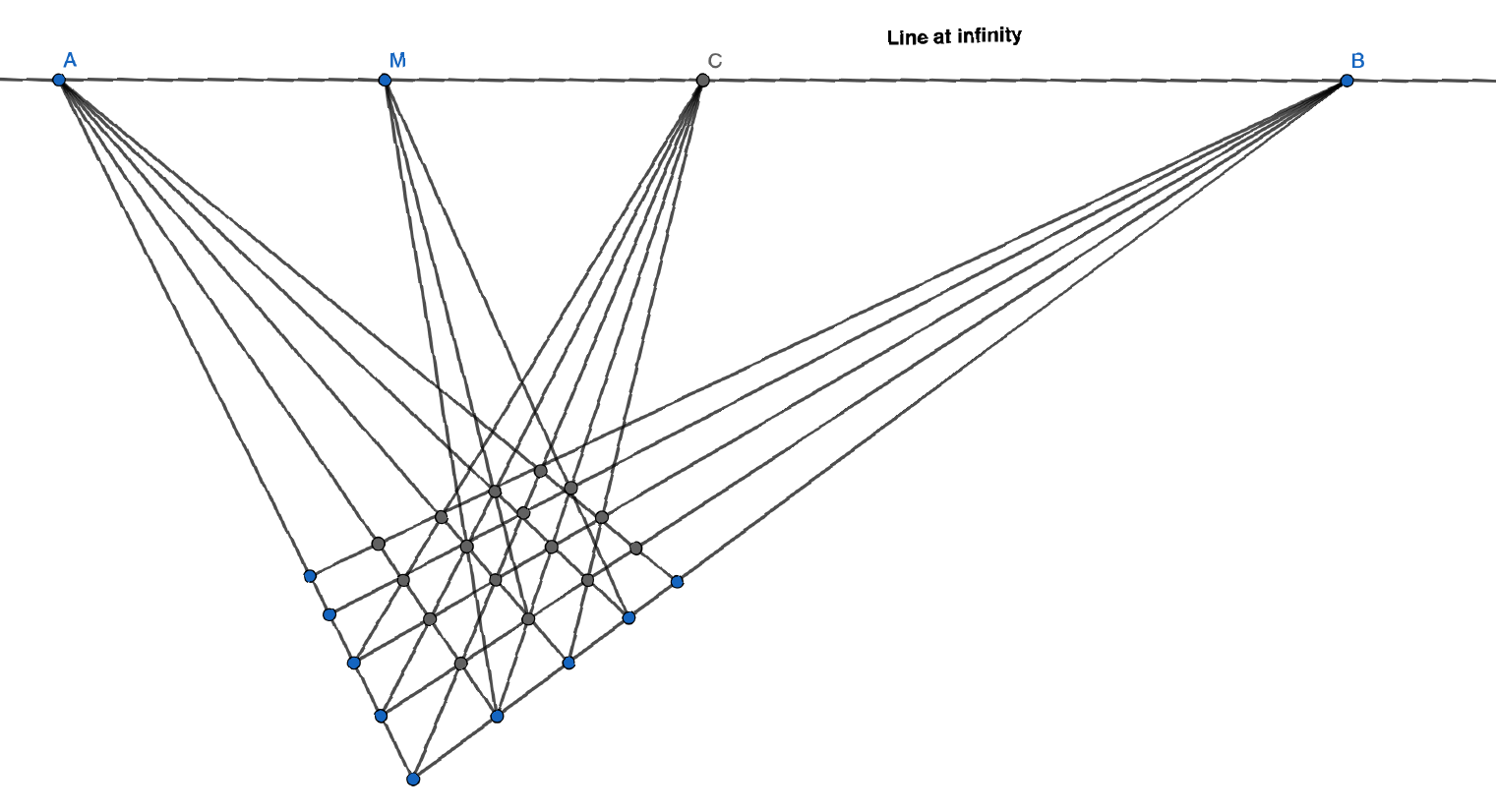}
\caption{$A, B,C,$ and $M$ are some of the points of directions on the line at infinity determined by the $5\times 5$ integer grid}
\label{fig:Grid}
\end{figure}

In Section \ref{directions} of the paper, we prove a strong structural result for some pointsets where many pairs determine few distinct directions. Using the combinatorial model for analyzing sets with few directions due to Eli Goodman and Ricky Pollack \cite{GP}, Jamison proved the following structural result.

\begin{theorem}[Jamison \cite{Jam}]\label{Jam}
If an $n$-element pointset contains no three collinear points and determines $n$ directions, then it is affinely equivalent to the vertices of a regular $n$-gon, i.e. the points are on a conic. 
\end{theorem}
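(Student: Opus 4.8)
The plan is to run the argument through the circular (allowable) sequence of Goodman and Pollack. Orthogonally project $P$ onto a directed line and rotate the line through half a turn; the projection orders the $n$ points, and this order changes precisely when the projection direction becomes orthogonal to a connecting line of $P$, at which instant the pair spanning that line exchanges two adjacent positions. Since $P$ has no three collinear points, no point lies on two connecting lines of a common direction, so the pairs that switch at one instant form a matching and, in the order just before the switch, each such pair occupies adjacent positions. Hence each switch event reverses a family of pairwise disjoint adjacent pairs; over the whole half turn each of the $\binom n2$ pairs switches exactly once; and the number of switch events equals the number $D$ of directions determined by $P$. Writing $M_d$ for the matching of pairs in direction $d$, we have $\sum_d|M_d|=\binom n2$ and $|M_d|\le\lfloor n/2\rfloor$, so $D\ge\binom n2/\lfloor n/2\rfloor$, which equals $n$ for $n$ odd; for $n$ even the classical lower bound of Ungar (valid even without general position) again gives $D\ge n$. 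In every case the hypothesis $D=n$ puts $P$ at the extreme.

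The first substantial step is to extract rigidity from extremality. For $n$ odd, $D=n$ forces $|M_d|=\tfrac{n-1}2$ for every $d$: each switch event is a near-perfect matching reversal (exactly one point held fixed), and the $n$ events, composed in cyclic order, produce the order-reversing permutation $w_0\in S_n$, whose length is exactly $\binom n2$. Thus the circular sequence of $P$ is, combinatorially, a minimum-depth parallel sorting network for $w_0$ using adjacent compare-exchanges, the extreme case attained by odd-even transposition sort. The regular $n$-gon $v_k=(\cos\tfrac{2\pi k}n,\sin\tfrac{2\pi k}n)$ realizes precisely this: projecting onto direction $\theta$ orders the $v_k$ by $\cos(\tfrac{2\pi k}n-\theta)$, the pair $\{v_j,v_k\}$ switches at $\theta\equiv\tfrac{\pi}{n}(j+k)\pmod{\pi}$, the switch events are $\theta_m=\tfrac{\pi m}n$ for $m=0,\dots,n-1$, and at $\theta_m$ exactly the pairs with $j+k\equiv m\pmod{n}$ switch. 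The heart of the proof is the converse: to show the combinatorial type of the circular sequence is \emph{uniquely} determined by $D=n$ and coincides with that of the affinely regular $n$-gon. For $n$ even one must first analyse the half-unit of slack coming from $\sum_d(n/2-|M_d|)=n/2$ and show, by a local parity/flow argument along the sweep, that the defects are forced into the single coherent pattern (alternating $n/2$ and $n/2-1$) that the even regular polygon exhibits, again pinning the combinatorial type.

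The second substantial step is realizability --- passing from the fixed combinatorial type back to geometry. Once the cyclic position of every connecting direction among the $n$ directions is known (up to a global orientation-preserving relabelling of the circle of directions), the order type of $P$ is determined, and I would then argue by induction on $n$ that any point set of this order type is an affine image of the regular $n$-gon. The induction step deletes the point held fixed by one switch event, leaving $n-1$ points with one fewer direction and the same special type; the base case and the gluing rely on a rigidity observation, that the enforced coincidences of directions are a linear system asserting that prescribed chords are parallel, whose solution set for points in general position is a single $\mathrm{GL}_2(\R)$-orbit --- that of the vertices of a conic. Equivalently, the parallel-chord data let one erect a projective scale on the conic through five of the points and verify directly that every remaining point lies on it.

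The step I expect to be the main obstacle is the combinatorial uniqueness in the middle --- excluding ``impostor'' circular sequences that respect the $D=n$ budget but do not arise from an affinely regular polygon. The counting identity $\sum_d|M_d|=\binom n2$ with $n$ terms has many abstract solutions, so one must genuinely use both that the $n$ matchings \emph{compose} to $w_0$ and that the sequence is \emph{realizable} by actual points (an order-type consistency constraint), to force the unique pattern; these two inputs pull in different directions and marrying them is the delicate point. The even case is the heavier of the two, since the slack in the matching sizes must be tracked coherently through the entire half turn, with the behaviour of the long diagonals controlled separately --- isolating an invariant that is at once combinatorial and affine is where the real work lies.
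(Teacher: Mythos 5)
This statement is quoted in the paper as a known theorem of Jamison and is not proved there; the paper only notes that Jamison's argument uses the Goodman--Pollack circular-sequence model, which is exactly the framework you adopt. Your setup is correct as far as it goes: the sweep produces an allowable sequence, general position makes each switch event a matching of adjacent pairs, each pair switches exactly once over the half turn, and the count $\sum_d |M_d|=\binom n2$ with $|M_d|\le\lfloor n/2\rfloor$ (plus Ungar for even $n$) places the hypothesis $D=n$ at the extreme. The identification of the regular $n$-gon's circular sequence with odd--even transposition sort is also right.

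However, the proposal has a genuine gap: the two steps that constitute essentially all of Jamison's theorem are announced rather than carried out. First, the claim that $D=n$ forces the combinatorial type of the circular sequence to be that of the affinely regular $n$-gon is not established; the decomposition of $w_0$ into $n$ stages of disjoint adjacent transpositions is far from unique as an abstract reduced word, and you yourself note that one must combine the composition constraint with geometric realizability to exclude impostors --- but no mechanism for doing so is given, and for even $n$ the ``tracking of slack'' is only named, not performed. Second, the realization step is also only asserted: knowing the allowable sequence, even with the full pattern of which pairs are parallel, you still must prove that the resulting system of parallelism constraints has as its solution set a single affine orbit (that of a conic-inscribed regular polygon); your proposed induction deleting the fixed point of a switch event is plausible but the ``gluing'' and base case are not supplied. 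Since both of these are precisely where the content of the theorem lies, the proposal is a correct plan in the right framework but not a proof.
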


 The Goodman-Pollack method is nicely explained in Chapter 12 of the Proofs from the BOOK by Aigner and Ziegler \cite{AZ} where they proved Ungar's theorem \cite{Un} on the minimum number of distinct directions determined by $n$ non-collinear points in the plane.
 
 In the problem of characterizing pointsets determining few directions, we show that if two $n$-element sets, $A$ and $B$, contain $cn^2$ distinct 
$(a,b)\in A\times B$ pairs determining at most $n$ directions, then -- under some combinatorial assumptions -- a positive fraction of the points of $A\cup B$ is on a conic.

\section{Elekes' conjecture for collinear triples}
While Conjecture \ref{Pascal} was disproved in general, we show that it  holds for certain sets of collinear triples. Given three pointsets in the plane, $P,Q,$ and $S,$ we say that the three sets are mutually avoiding sets if no line determined by two points in a set intersects the two convex hulls of the other two pointsets. The concept of mutually avoiding sets is often used in discrete and computational geometry, like in \cite{AEP}, \cite{V}, \cite{MS} and \cite{PRT}. It was proved in \cite{AEP} that any planar $n$-element pointset in general position contains two mutually avoiding subsets of size at least $\sqrt{n/12}.$

\begin{figure}[h!]
\centering
\includegraphics[width=6cm, angle=-90]{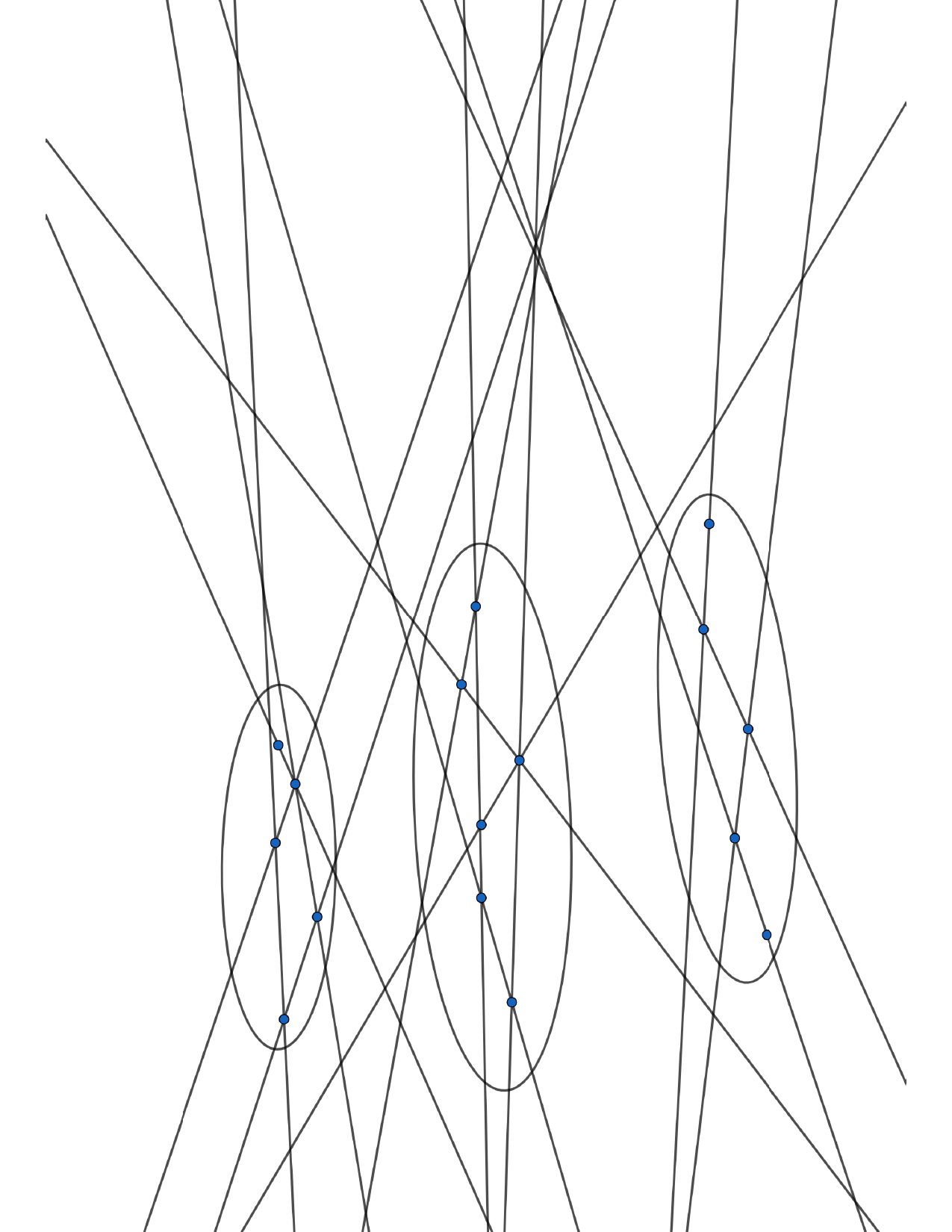}
\caption{The three sets are mutually avoiding}
\label{fig:Mutual}
\end{figure}

\begin{theorem}\label{main1}
For any $\delta>0$ constant, there is a threshold $n_0=n_0(\delta)$, so the following holds. Let $A,B,C$ be mutually avoiding $n$-element pointsets in the plane. Let us suppose that there are at least $\delta n^2$ collinear $(a,b,c)$ triples, where $a\in A, b\in B,$ and $c\in C.$ If $n\geq n_0,$ then there is a tic-tac-toe configuration among the collinear triples.
\end{theorem}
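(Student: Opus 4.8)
The plan is to recast the existence of a tic-tac-toe as a question about colored six-cycles in a bipartite graph, and then to use the strong order structure that mutual avoidance imposes on that coloring. First I would record the basic consequences of mutual avoidance: a line through two points of $A$ misses $\mathrm{conv}(B)\cup\mathrm{conv}(C)$, so it carries no point of $B$ or $C$; hence every one of the $\delta n^2$ triples lies on a line meeting each of $A,B,C$ in exactly one point. Let $G$ be the bipartite graph on $A\cup B$ whose edges are the pairs $(a,b)$ lying on a common triple. Then $|E(G)|=\delta n^2$, each edge $ab$ has a well-defined ``color'' $c(a,b)\in C$ (the third point of $C$ on the line $ab$), and $c$ is a \emph{proper} edge-coloring, since two edges $ab,ab'$ of the same color $\gamma$ would force both $b,b'$ onto the line $a\gamma$. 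Now, because each of the six triples of a tic-tac-toe is rainbow for the partition $(A,B,C)$, the $3\times 3$ array of its nine points is a Latin square in the three colors $A,B,C$, hence — after relabeling rows, columns, and the roles of $A,B,C$ — the cyclic one; reading off which grid cells belong to $A$ and which to $B$, a tic-tac-toe is \emph{exactly} a six-cycle $a_1b_1a_3b_3a_2b_2a_1$ in $G$ whose six edges receive, in cyclic order, colors $c_1,c_2,c_3,c_1,c_2,c_3$ — the ``hexagon with opposite edges equicolored'' from Elekes' problem stated above. Its nine points are automatically distinct as soon as $a_1,a_2,a_3$ are distinct, because $c$ is proper. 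So it suffices to produce such a six-cycle in $G$.

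The hypothesis enters because $c$ is far from an arbitrary proper coloring. From mutual avoidance one gets that $\mathrm{conv}(A),\mathrm{conv}(B),\mathrm{conv}(C)$ are pairwise disjoint, that no point of $C$ is collinear with two points of $A$, and that all of $C$ lies on one side of any line through two points of $A$. Consequently the angular order in which the points of $A$ are seen from a point of $C$ does not depend on which point of $C$ is chosen, and, up to a global reversal that I fix, it yields a single linear order $\prec_A$ on $A$; likewise one obtains $\prec_B$ and $\prec_C$. With these orientations chosen suitably, the color $c(a,b)$ is monotone in $\prec_A$ and $\prec_B$ — fixing $b$ and moving $a$ up $\prec_A$ rotates the line $ab$ about $b$ and slides its third point monotonically through $C$ — so each color class $M_\gamma=\{\,(a,b):c(a,b)=\gamma\,\}$ is a monotone partial bijection $A\to B$, and the family $\{M_\gamma\}_{\gamma\in C}$ is itself linearly ordered by $\prec_C$: if $\gamma\prec_C\gamma'$ then $M_\gamma$ lies uniformly ``below'' $M_{\gamma'}$, i.e. $M_\gamma(a)\prec_B M_{\gamma'}(a)$ and $M_\gamma^{-1}(b)\succ_A M_{\gamma'}^{-1}(b)$ wherever both sides are defined. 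This is precisely the Goodman--Pollack-style combinatorial picture exploited in the last section of the paper.

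It remains to build the six-cycle. A first-moment bound yields $\Omega(\delta n)$ ``rich'' colors, each with $|M_\gamma|=\Omega(\delta n)$. Translating the six required incidences through the monotonicity above, a tic-tac-toe on colors $c_1\prec_C c_2\prec_C c_3$ is the same as the three rich matchings $M_{c_1},M_{c_2},M_{c_3}$ \emph{interlocking}: there must be $a_1,a_2,a_3$ with $M_{c_2}^{-1}M_{c_1}(a_1)=a_3$, $M_{c_1}^{-1}M_{c_3}(a_3)=a_2$ and $M_{c_3}^{-1}M_{c_2}(a_2)=a_1$, equivalently the monotone increasing partial self-map $\Phi=M_{c_3}^{-1}M_{c_2}\,M_{c_1}^{-1}M_{c_3}\,M_{c_2}^{-1}M_{c_1}$ of $A$ has a fixed point inside its domain of definition. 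I would choose the three rich colors so that their supports overlap heavily — a further averaging, since there are $\Omega(\delta n)$ rich colors and each support has size $\Omega(\delta n)$ — to keep the partial composition defining $\Phi$ defined along a long chain of points; the monotonicity of $\gamma\mapsto M_\gamma$ then traps $\Phi$ between ``below the identity'' at the bottom of this chain and ``above the identity'' at the top, and a discrete Knaster--Tarski / intermediate-value argument delivers the fixed point. Strictness of the $M_\gamma$ (bijections on their supports) forces $a_1,a_2,a_3$ distinct, so the resulting nine points and six lines form a genuine tic-tac-toe.

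The substantive difficulty is exactly this last step. Because the matchings are only partially defined, ``interlocking'' is not for free, and the pigeonholing must be done carefully enough to find three rich colors whose supports overlap so much that the composition $\Phi$ stays defined on an entire interval (so that the fixed-point argument applies) while the nine output points remain distinct. Everything upstream — transversality of the triples, the reduction to an equicolored hexagon in $G$, and the ordered monotone-matching structure of $c$ — is robust and follows from mutual avoidance by the Goodman--Pollack ordering; the genuine work is in controlling the partiality and the non-degeneracy.
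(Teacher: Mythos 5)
Your upstream reductions are correct and consistent with the setup used in the paper: mutual avoidance forces each triple's line to be transversal to $A,B,C$, the induced edge-colouring of the bipartite graph $G(A,B)$ by points of $C$ is proper, a tic-tac-toe is exactly an equicoloured hexagon $c_1c_2c_3c_1c_2c_3$ in $G$, and the Goodman--Pollack ordering does make each colour class a monotone partial matching, the classes themselves linearly ordered. This matches the three ordering conditions the paper records at the start of its proof. The problem is the step you yourself flag as ``the substantive difficulty'': it is not a technicality about controlling partiality, it is the entire content of the theorem, and the mechanism you propose for it cannot work. Averaging gives you rich colours and, at best, colour classes whose \emph{domains} in $A$ overlap heavily; but the composition $M_{c_2}^{-1}M_{c_1}$ is defined at $a$ only if $M_{c_1}(a)$ lands in the \emph{image} of $M_{c_2}$, and no amount of domain overlap controls that. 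Concretely, put $A,B,C$ on three parallel lines (such collinear sets are mutually avoiding, since the only lines determined by two points of one set are the three carrier lines) and take the triples $(x,\,x+s,\,x+2s)$ with $s$ ranging over a Behrend set: every $M_\gamma$ is a translation restricted to a translate of the Behrend set, $\Phi$ is the identity wherever it is defined, yet $\Phi$ is defined at no point for three distinct colours, because that would produce a nontrivial three-term arithmetic progression. This is the standard Ruzsa--Szemer\'edi lower-bound construction realized geometrically with $n^{2-o(1)}$ collinear triples and no tic-tac-toe; it shows both that your specific averaging-plus-fixed-point plan fails and that no argument of that elementary counting type (which would tolerate $\delta=n^{-o(1)}$) can exist.

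Any correct proof must therefore invoke something of removal-lemma strength, and that is exactly what the paper does, twice. It partitions $A$ and $C$ into blocks of constant size $M$, prunes so that every surviving triple sits in a block pair carrying $\Omega(\delta M^2)$ triples on $O(M/\delta)$ points of $B$ (this pruning is where the geometric ordering is really used, to show a surviving triple cannot be isolated between its blocks), applies the $(6,3)$ theorem inside each block pair to harvest $\Omega_\delta(n^2)$ ``skinny'' $(6,3)$ configurations whose $A$-, $C$- and (by a further ordering argument) $B$-pairs all have bounded index gaps, and then applies the $(6,3)$ theorem a second time to the $3$-uniform hypergraph whose edges are these skinny configurations, producing a $(12,9)$ configuration that contains a tic-tac-toe. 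If you want to salvage your framework, the fixed-point/interlocking picture would have to be replaced at its core by this double application of Theorem 5; the ordering you set up is then used not to run an intermediate-value argument but to guarantee that the $(6,3)$ configurations found in the first round are local enough to serve as edges in the second.
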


We will show a density version of Theorem \ref{main1}: If we select a subset of the collinear triples spanned by $A, B$ and $C$ of size at least $\delta n^2$ then in this subset, there will be a tic-tac-toe configuration. 
In this paper's proofs, when we refer to collinear triples (or simply triples or edges), these are triples in our selected subset of the geometrically collinear triples. We can remove them in the  proofs, even if the three points remain collinear triple in the geometric sense.
In the proof of Theorem \ref{main1}, we are going to use the geometric properties of the triples and the $(6,3)$ theorem of Ruzsa and Szemer\'edi twice (Theorem \ref{6:3} below). Because of the use of the Ruzsa-Szemer\'edi theorem, our bound on $n_0=n_0(\delta)$ is rather weak. 

As was mentioned earlier, it follows from the Brackenridge–Maclaurin theorem that if in Theorem \ref{main1} set $C$ is a set of collinear points, then three points in $A$ and three in $B$ are lying on the same conic (Figure  \ref{fig:Pascal}). We will analyze this case in Section \ref{collinear} of the paper.

\medskip
\subsection{Proof of Theorem \ref{main1}}\label{sec_proof}
During the proof, we will assume that set $B$ is the set of the middle points of the collinear triples. Since the three sets $A, B,$ and $C$ are mutually avoiding, any point in $B$ sees every point in $A$ in the same clockwise order and sees every point in $C$ in the same anticlockwise order in which we are using to label the points as $A=\{a_1,\ldots,a_n\}, C=\{c_1,\ldots,c_n\}.$ 
Now we sweep a line through an arbitrary point of $A$ anticlockwise to label $B=\{b_1,\ldots,b_n\}.$
With this labelling, the following conditions hold:

\begin{itemize}
    \item If $a_i,b_{j_1},c_{k_1}$ and $a_i,b_{j_2},c_{k_2}$ are collinear triples and $j_1<j_2$ then $k_1<k_2$
    \item If $a_{i_1},b_{j},c_{k_1}$ and $a_{i_2},b_{j},c_{k_2}$ are collinear triples and $i_1>i_2$ then $k_1<k_2$
   \item If $a_{i_1},b_{j_1},c_{k}$ and $a_{i_2},b_{j_2},c_{k}$ are collinear triples and $i_1<i_2$ then $j_1<j_2.$
\end{itemize}

This ordering will be crucial in the proof since we will search for $(6,3)$ configurations (the second graph, labelled by {\em b,} in figure \ref{fig:evol}) where the vertices in the same set are close to each other. We will see that in this ordered geometric setting, $\delta n^2$ collinear triples determine some $\delta' n^2$ $(6,3)$ configurations on not too many, $\Delta n,$ close point-pairs ($\delta'$ and $\Delta$ are constants depending on $\delta$ only). Then we can apply the Ruzsa-Szemer\'edi theorem again to find a $(12,9)$ configuration, with nine edges spanned by twelve edges. This configuration contains tic-tac-toe (Figure \ref{fig:evol}).

\begin{figure}[h!]
\centering
\includegraphics[width=14cm]{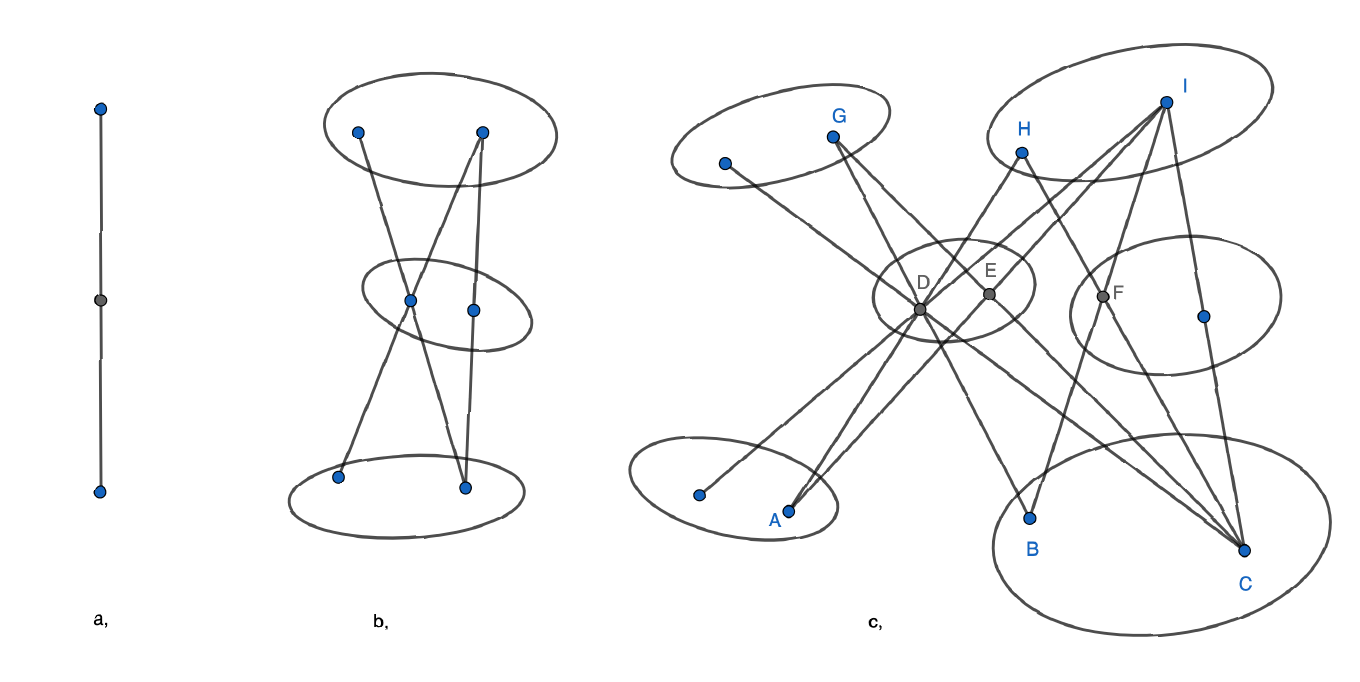}
\caption{From triples to tic-tac-toe (points $A, B,C, D,E, F,G, H,I$)}
\label{fig:evol}
\end{figure}

\subsubsection{Finding many skinny (6,3) configurations}
We begin this subsection by stating the theorem of Ruzsa and Szemer\'edi.

\begin{theorem}[Ruzsa-Szemer\'edi \cite{RSz}]\label{6:3}
For any $\epsilon>0$, there is a threshold, $\nu(\epsilon),$ such that if a 3-uniform hypergraph has $n\geq \nu(\epsilon)$ vertices and at least $\epsilon n^2$ edges, then there are six vertices spanning at least three edges of the hypergraph.
\end{theorem}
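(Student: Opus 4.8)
The plan is to derive this from the triangle removal lemma (equivalently, from Szemer\'edi's regularity lemma); this is in fact the original route of Ruzsa and Szemer\'edi, and the two statements are essentially equivalent. Working with the contrapositive, assume $H$ is a $3$-uniform hypergraph on $n$ vertices in which no six vertices span three edges, and aim to show $|E(H)|=o(n^2)$. First I would record two elementary consequences of the hypothesis. No pair of vertices lies in three edges: three such edges live on five vertices and span three edges, and adjoining any sixth vertex produces a $(6,3)$. And if a pair $\{a,b\}$ lies in two edges $\{a,b,c\}$ and $\{a,b,d\}$, then every other edge is disjoint from $\{a,b,c,d\}$, since otherwise its union with $\{a,b,c,d\}$ has at most six vertices and spans three edges. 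Hence such ``books'' form a disjoint family; there are at most $n/4$ of them, contributing at most $n/2$ edges, and deleting their vertices and edges leaves a \emph{linear} hypergraph $H'$ (any two edges meet in at most one vertex) still with the no-$(6,3)$ property. So it suffices to prove $|E(H')|=o(n^2)$ for linear $H'$.

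Next, fix a linear order on $V(H')$ and build an auxiliary tripartite graph $\Gamma$ with three parts $V^{(1)},V^{(2)},V^{(3)}$, each a copy of $V(H')$: for every hyperedge, written as $\{a<b<c\}$, put the triangle on $a^{(1)},b^{(2)},c^{(3)}$ into $\Gamma$. Then $\Gamma$ has $3n$ vertices, $3|E(H')|$ edges, and contains $|E(H')|$ triangles coming from hyperedges; since two distinct hyperedges of the linear hypergraph $H'$ share at most one vertex, the corresponding triangles of $\Gamma$ share at most one vertex, so these $|E(H')|$ triangles are pairwise edge-disjoint.

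The crucial step is to show that \emph{these are the only triangles of $\Gamma$}. A triangle $a^{(1)}b^{(2)}c^{(3)}$ forces the existence of hyperedges $e_1$ with $a=\min e_1$, $b=\mathrm{med}(e_1)$; $e_2$ with $b=\mathrm{med}(e_2)$, $c=\max(e_2)$; and $e_3$ with $a=\min e_3$, $c=\max(e_3)$. If $e_1,e_2,e_3$ are not all equal, a short case analysis using linearity shows they are forced to be three \emph{distinct} edges, and then the six vertices consisting of $a,b,c$ together with the third vertex of each $e_i$ are pairwise distinct --- any coincidence among them would make two of the $e_i$ share two vertices --- and they span the three edges $e_1,e_2,e_3$, a forbidden $(6,3)$. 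Hence $e_1=e_2=e_3=\{a,b,c\}$ and the triangle is one of the hyperedge triangles; in particular $\Gamma$ has exactly $|E(H')|$ triangles, pairwise edge-disjoint.

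Finally, the triangle removal lemma closes the argument. Because the triangles of $\Gamma$ are edge-disjoint, at least $|E(H')|$ edges must be deleted to make $\Gamma$ triangle-free. If $|E(H')|\ge\epsilon' n^2$ for a constant $\epsilon'>0$, then on $N=3n$ vertices $\Gamma$ requires at least $(\epsilon'/9)N^2$ deletions, so by the triangle removal lemma \cite{F} it contains at least $c(\epsilon')N^3$ triangles; but $\Gamma$ has exactly $|E(H')|$ triangles and a linear $3$-uniform hypergraph on $n$ vertices has at most $\binom{n}{2}/3\le n^2/6$ edges, which is impossible once $n$ is large. Thus $|E(H')|=o(n^2)$, adding back the $O(n)$ book edges gives $|E(H)|=o(n^2)$, and unwinding the quantifiers yields a threshold $\nu(\epsilon)$. \textbf{The main obstacle is the triangle removal lemma itself} (hence Szemer\'edi's regularity lemma): the rest is bookkeeping, and the very weak (tower-type) dependence of $\nu(\epsilon)$ on $\epsilon$ --- in line with the earlier remark about Fox's bound --- is inherited entirely from that input.
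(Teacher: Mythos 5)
The paper does not prove this statement at all: Theorem \ref{6:3} is imported as a black box with the citation to Ruzsa and Szemer\'edi, so there is no in-paper argument to compare yours against. What you have written is a correct reconstruction of the classical proof, and all the key steps check out. The preliminary reduction is sound: a pair in three edges immediately yields a $(6,3)$ on five vertices plus any sixth, and a pair $\{a,b\}$ in exactly two edges $\{a,b,c\},\{a,b,d\}$ forces every other edge to avoid $\{a,b,c,d\}$, so these ``books'' are pairwise vertex-disjoint, contribute $O(n)$ edges, and their removal leaves a linear hypergraph with the same forbidden-configuration property. The tripartite encoding and the claim that every triangle of $\Gamma$ comes from a single hyperedge are also right: if $e_1,e_2,e_3$ are not all equal, linearity (two shared vertices force equality) shows they are pairwise distinct and that their union has exactly six vertices, giving the forbidden $(6,3)$; and distinct hyperedges yield edge-disjoint triangles. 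The final contradiction between the removal lemma's $\Omega(N^3)$ triangles and the at most $n^2/6$ edge-disjoint ones is the standard punchline, and unwinding the contrapositive gives the threshold $\nu(\epsilon)$. Your closing remark correctly identifies where the weakness of $\nu(\epsilon)$ comes from, which is consistent with the paper's own comment that its bounds are poor precisely because Theorem \ref{6:3} is invoked (twice) through the triangle removal lemma.
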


In our application, the hypergraph is a linear three-partite 3-uniform hypergraph since the edges a defined by collinear triples with one vertex in each set.
The only $(6,3)$ configuration which is possible is isomorphic to the second figure (b,) in Figure \ref{fig:evol}. 
\begin{definition}
A $(6,3)$ configuration in the sets $A=\{a_1,\ldots,a_n\}, B=\{b_1,\ldots,b_n\}, C=\{c_1,\ldots,c_n\}$ is called $N$-skinny if the spanning vertices are $a_i,a_j,b_k,b_\ell,c_u,c_v$ and 
$\max\{|i-j|,|k-\ell|,|u-v|\}\leq N.$
\end{definition}

\begin{lemma}\label{skinny}
For any $\delta>0$ there is a $\delta'>0$ and natural number $N$ such that $\delta n^2$ collinear $a,b,c$ triples ($a\in A, b\in B, c\in C$)  determine at least $\delta' n^2$ $N$-skinny $(6,3)$  configurations provided $n$ is large enough. Moreover, one can assume that the $(6,3)$ configurations are edge-disjoint; they don't share collinear triples.
\end{lemma}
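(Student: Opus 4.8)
The plan is to iterate the Ruzsa–Szemerédi theorem (Theorem \ref{6:3}) while tracking the ``skinniness'' of the produced configurations by exploiting the monotone ordering of $A$, $B$, $C$. First, note that the three monotonicity conditions say the following: the set of triples, viewed through the three coordinate pairs $(i,j)$, $(j,k)$, $(i,k)$, behaves like a monotone matching — fixing one coordinate, the other two are monotonically related. Consequently a $(6,3)$ configuration with spanning vertices $a_i,a_j,b_k,b_\ell,c_u,c_v$ has its three ``gaps'' $|i-j|$, $|k-\ell|$, $|u-v|$ all comparable: if the configuration is the only possible one (picture (b)), then its three edges, say $(a_i,b_k,c_v)$, $(a_i,b_\ell,c_u)$, $(a_j,b_k,c_u)$ (up to relabelling), together with monotonicity force that the index intervals $[\min,\max]$ on $A$, on $B$, and on $C$ each contain a comparable number of ``active'' indices. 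This is the structural observation that converts ``few collinear triples per box'' into ``skinny.''

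Second, I would run a dyadic/box-partition argument. Partition the index set $\{1,\dots,n\}$ of $A$ into $n/M$ consecutive blocks of length $M$, and similarly for $B$ and $C$, where $M$ is a large constant to be chosen. This partitions the $\delta n^2$ triples among $(n/M)^3$ triple-blocks, but because of monotonicity most triple-blocks are empty: fixing the $A$-block and the $B$-block essentially determines (up to $O(1)$ choices, by the monotone relation between $j$ and $k$) the $C$-block, so the triples are supported on only $O((n/M)^2)$ nonempty triple-blocks. Hence by averaging a positive fraction — at least $\delta'' n^2$ for some $\delta''=\delta''(\delta)$ — of the triples lie in triple-blocks that individually contain $\gtrsim \delta'' M^2$ triples. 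Each such triple-block is a copy of the problem on $\le M$ vertices per part with edge-density bounded below by a constant; choosing $M \ge \nu(\delta''/2)$ and applying Theorem \ref{6:3} inside it yields a $(6,3)$ configuration all of whose spanning vertices lie in blocks of length $M$, hence $\max\{|i-j|,|k-\ell|,|u-v|\}\le M =: N$. That is an $N$-skinny $(6,3)$.

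Third, to get $\delta' n^2$ skinny configurations that are moreover edge-disjoint, I would extract them greedily. Start with the selected set of $\ge \delta'' n^2$ triples; repeatedly find a dense triple-block, pull out one skinny $(6,3)$, delete its three triples, and continue. Deleting $3$ triples per step, we can repeat $\ge (\delta''/4) n^2$ times before the surviving triple count drops below $(\delta''/2) n^2$, at which point the same averaging-plus-RSz argument still applies (the constants only need to be halved). Each extracted configuration uses three previously-unused triples, so the family is edge-disjoint; setting $\delta' = \delta''/4$ finishes the count.

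The main obstacle is the second step — making precise the claim that monotonicity forces the triples onto only $O((n/M)^2)$ triple-blocks, and simultaneously that a $(6,3)$ found inside a single triple-block is automatically skinny with all three gaps bounded by $M$. The subtlety is that a $(6,3)$ configuration's six vertices could a priori straddle block boundaries even if its three edges each live in the same triple-block; one has to check that the unique realizable $(6,3)$ pattern (Figure \ref{fig:evol}(b)), combined with the three monotonicity conditions, really does confine all six indices to a single length-$M$ window in each of $A$, $B$, $C$. I expect this to come down to a short case analysis of which three of the four ``corner'' triples form the $(6,3)$, using that each monotonicity condition is an order-isomorphism on the relevant projection. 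Everything else (averaging, choice of $M$ via $\nu(\cdot)$, greedy edge-disjoint extraction) is routine.
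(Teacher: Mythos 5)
Your overall architecture (block partition, averaging, Ruzsa--Szemer\'edi inside dense blocks, greedy extraction for edge-disjointness) resembles the paper's in spirit, but the step you yourself flag as ``the main obstacle'' is a genuine gap, and it is exactly where the paper spends most of its effort. The claim that monotonicity confines the triples to $O((n/M)^2)$ non-empty triple-blocks --- equivalently, that fixing the $A$-block and the $B$-block determines the $C$-block up to $O(1)$ choices --- does not follow from the three ordering conditions. Those conditions are order-isomorphisms on the relevant projections, but an order-isomorphism of $\{1,\dots,n\}$ need not map an interval of length $M$ into an interval of length $O(M)$: for a fixed $b$, the $M$ consecutive points of a block of $A$ can be matched to points of $C$ whose indices spread over an arbitrarily long range. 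This is precisely the scenario of Figure \ref{fig:off} in the paper (``there is only one collinear triple between $P_i$ and $Q_j$''); in the worst case the triples could sit essentially one per triple-block on up to $\delta n^2$ distinct triple-blocks, and the averaging step produces nothing. For the same reason your opening assertion that the three gaps $|i-j|,|k-\ell|,|u-v|$ of a $(6,3)$ configuration are ``all comparable'' is unjustified.

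The paper closes this gap with machinery your sketch would need to reproduce. It partitions only $A$ and $C$ (not $B$), and for each $b\in B$ runs a two-stage pruning: first discarding the triples through $b$ whose $A$-block or $C$-block contributes fewer than $\frac{\delta}{4}M$ of them, and then discarding ``weak'' triples, i.e.\ those whose $(P_i,Q_j)$ pair carries fewer than $\frac{\delta}{8}M$ triples through $b$. The geometric ordering is used quantitatively at this point: a maximal run of consecutive weak triples has length less than $\frac{\delta}{8}M$ and is followed by at least $\frac{\delta}{8}M$ good ones, so only about half of the triples are lost (Claim \ref{calc}). Even after all this, skinniness in the $B$-coordinate is \emph{not} automatic and is obtained by a separate argument (configurations sharing a $C$-pair have pairwise disjoint index intervals $[k,\ell]$ in $B$), which yields only $N=\frac{32M^3}{\delta}\gg M$; so confining all six vertices to single length-$M$ windows, as your plan requires, is not achievable even in the paper's stronger setup. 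Your greedy extraction for edge-disjointness is fine in principle, but it rests on the unproved density claim.
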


\begin{proof}
Let us choose a large constant $M.$ We will determine the value of $M$ during the proof. It is a constant, so for the sake of simplicity, we can assume that $M$ divides $n.$ Let us partition the vertices of $A$ and $C$ into $n/M$ classes such that each partition class contains consecutive elements. $P_1=\{a_1,\ldots,a_M\}, P_2=\{a_{M+1},\dots,a_{2M}\},\ldots$ etc. and  $Q_1=\{c_1,\ldots,c_M\}, Q_2=\{c_{M+1},\dots,c_{2M}\},\ldots$ etc. 

\begin{definition}
For an $(i,j)$ pair, $b\in B$ is said to be {\em $\varepsilon$-good} if there are at least $\varepsilon M$ collinear triples $(a,b,c)$ where $a\in P_i$ and $c\in Q_j.$ 
By saying that an $(a,b,c)$ triple is $\varepsilon$-good we mean that $b$ is $\varepsilon$-good for the $(i,j)$ pair, where $a\in P_i$ and $c\in Q_j.$
\end{definition}

Let us count the number of $(b,i,j)$ triples where $b$ is $\varepsilon$-good for $(i,j)$. 

\medskip
For every $b\in B$ let us complete the following pruning process:

\begin{enumerate}
\item 
If the vertices of a $P_i$ contribute to less than $\frac{\delta}{4}M$ triples with $b$ and any $c\in C$ then let's remove these triples ($1\leq i\leq n/M$). Then we repeat the process with $Q_i$-s, if there are less than 
$\frac{\delta}{4}M$ vertices in a $Q_i$ forming a collinear triple with $b$ and some vertices of $A$ then remove the triples. Next, we check $P_i$-s again, removing triples if there is less than $\frac{\delta}{4}M$ triples with the vertices in this partition class. Then we check $Q_i$-s again and repeat this alternating process as long as possible. The total number of removed triples is denoted by $T_b$. It is less than $2\frac{n}{M}\frac{\delta}{4}M=\frac{\delta}{2}n $. 
\item
  Let us call triples $(a,b,c)$ $a\in P_i$ and $c\in Q_j$ with point $b\in B$ to be {\em weak} if the total number of collinear triples  $(a',b,c')$ $a'\in P_i$ and $c'\in Q_j$ is less than $\frac{\delta}{8}M,$ or, equivalently,  $b$ is not $\frac{\delta}{8}$-good for $(i,j).$  Remove all weak triples.
\end{enumerate}

We have to show that after the second step, we still have many triples if there were many triples through $b$. At this point, we have to use the geometric properties of sets $A$ and $C.$
If there is a collinear triple $(a,b,c),$ $a\in P_i$ and $c\in Q_j$ with point $b\in B$, then we can't conclude that there are many triples between $P_i$ and $Q_j$ through $b$ even in this geometric setting (see Figure \ref{fig:off}).  

\begin{figure}[h!]
\centering
\includegraphics[width=14cm]{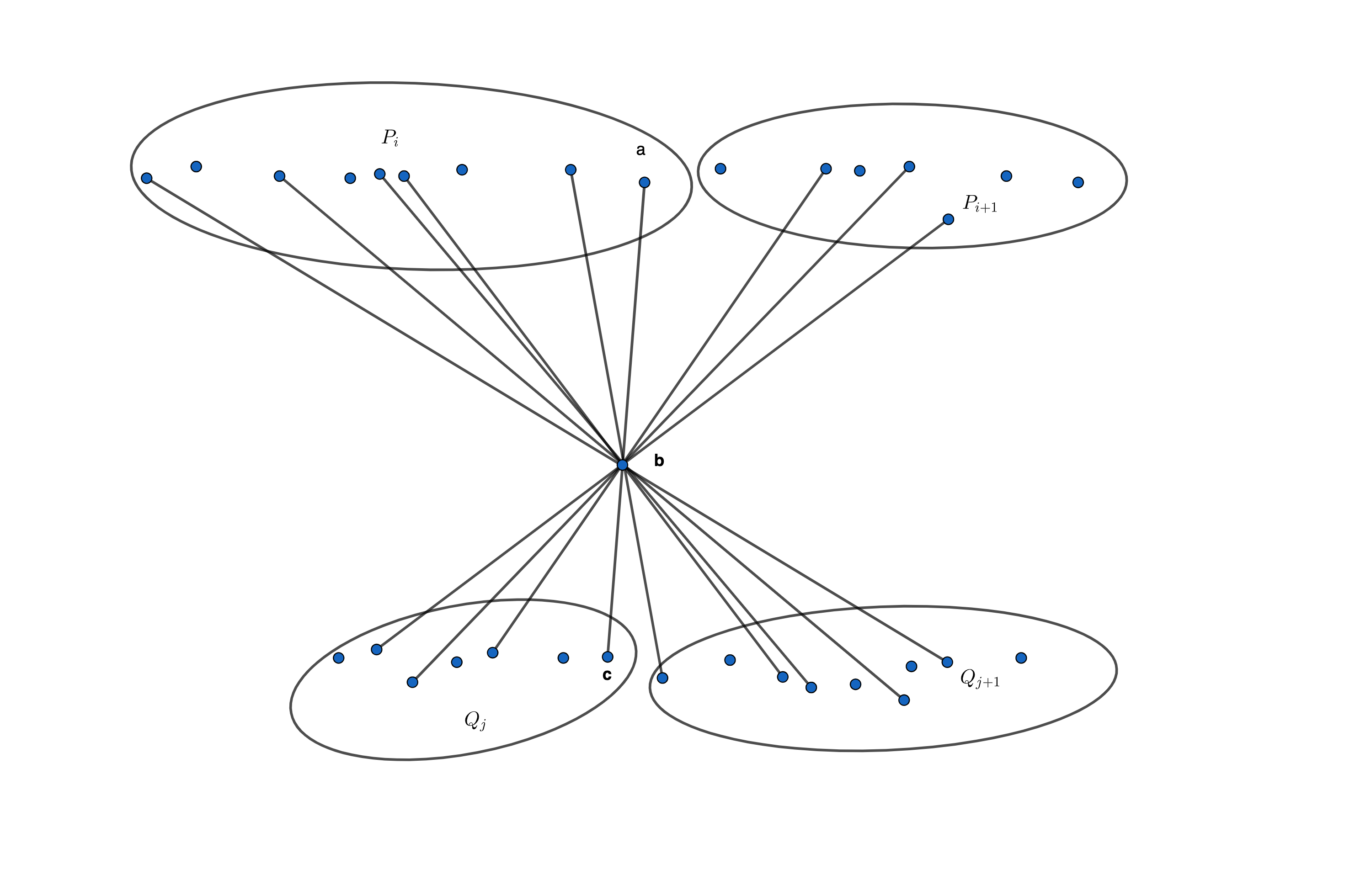}
\caption{There is only one collinear triple between $P_i$ and $Q_j$ }
\label{fig:off}
\end{figure}

On the other hand, a collinear triple guarantees that there are many triples either between $P_i$ and $Q_j$ or in their neighbourhood.
If there is a collinear triple between $P_i$ and $Q_j$ via $b\in B$ then let $V_b=\{a_{i_1},\ldots, a_{i_m}\}\subset P_i,$ (${i_1}<\ldots<{i_m}$) denote the set of points in $P_i$ where there is another point in $C$ forming a triple with $b.$ Similarly, $W_b=\{c_{j_1},\ldots, c_{j_\ell}\}\subset Q_j,$ (${j_1}<\ldots<{j_\ell}$) denote the set of points in $Q_j$ where there is another point in $A$ forming a triple with $b.$ 
Because of the first step, we  know that both $m$ and $\ell$ are at least $\frac{\delta}{4}M.$ 

\medskip
For any triple, $(a_{i_s},b,c_{j_t}),$ at least $\min\left(s,\frac{\delta}{4}M\right)$ triples $(a_{i_u},b,x),$ where $x\in C$ and $u\leq s,$ are triples between $P_i$ and $Q_j\cup Q_{j+1}.$
\footnote{Let us suppose that $P_0=Q_0=P_{\frac{n}{m}+1}=Q_{\frac{n}{m}+1}=\emptyset$} When $u\geq s$ then at least $\min\left(m-s,\frac{\delta}{4}M\right)$ triples are 
between $P_i$ and $Q_j\cup Q_{j-1}.$ On the other side, for the triple $(a_{i_s},b,c_{j_t}),$ at least $\min\left(t,\frac{\delta}{4}M\right)$ triples $(y,b,c_{j_v})$, where $y\in A$
and $v\leq t$, are triples between $Q_j$ and $P_i\cup P_{i+1}.$ When $v\geq t$ then at least $\min\left(\ell-v ,\frac{\delta}{4}M\right)$ triples are between $Q_j$ and $P_i\cup P_{i-1}\}.$ 

If $(a_{i_s},b,c_{j_t})$ is weak then both $a_{i_s}$ and $c_{j_t}$ should be among the first $\frac{\delta}{8}M$ points or both are among the last $\frac{\delta}{8}M$ points of $V_b$ and $W_b.$ More precisely $$\min(s+t,(m+\ell-s-t))\leq \frac{\delta}{8}M.$$ 
After the first step of the pruning process, let's check the triples $(a_\alpha ,b,c_\beta ),$ $a_\alpha\in A, c_\beta\in C,$ starting with the least index $\alpha$ in increasing order. By the previous observations, the longest sequence of weak triples is less than $\frac{\delta}{8}M$ always followed by at least $\frac{\delta}{8}M$ many $\frac{\delta}{8}$-good triples (if there are triples left). 
If the number of triples through $b$ after the first step is denoted by $\Gamma_b$, then the number of weak triples is at most $\frac{\Gamma_b}{2}+ \frac{\delta}{8}M.$

\noindent
After the pruning process is finished for all $b\in B,$ the total number of removed triples is not more than

\begin{equation}\label{badtriples_0}
\sum_{b\in B}\left(T_b+\frac{\Gamma_b}{2}+\frac{\delta}{8} M\right).
\end{equation}

\noindent
Since $\sum_{b\in B}(T_b+\Gamma_b)=\delta n^2$ and $T_b<\frac{\delta}{2}n$, we have the following upper bound on the number of removed triples:

\begin{equation}\label{badtriples}
\sum_{b\in B}\left(\frac{T_b}{2} +\frac{T_b+\Gamma_b}{2}+\frac{\delta}{8} M\right)\leq \frac{\delta}{4}n^2 + \frac{\delta n^2}{2} + \frac{\delta}{8} Mn=
\frac{3\delta n^2}{4} + \frac{\delta}{8} Mn.
\end{equation}

\medskip
\noindent
We still have about the quarter of the original triples, $\frac{\delta}{4}n^2-O(n),$ and now every triple is $\frac{\delta}{8}$-good for the relevant $(i,j)$ pair. 
For reference in later applications, we record our partial result in the following claim. 

\begin{claim}\label{calc}
For any $\delta>0$ there is a constant $M$ such that under the conditions of Theorem \ref{main1} at least $\frac{\delta}{4}n^2 -\frac{\delta}{8} Mn$ triples are $\frac{\delta}{8}$-good. 
\end{claim}

In the calculations leading to Claim \ref{calc}, the parameter $M$ was set to be a constant, however, the bounds hold (with a larger error term)  if $M$ is a slow-growing function of $n$.

\noindent
Note that up to this point, the only property we used for the pointset $B$ is that no line determined by the points of $A$ or by the points of $C$ intersects the convex hull of $B,$ so Claim \ref{calc} holds under the weaker conditions that $A$ and $C$ are mutually avoiding and $A$, and $C$ are avoiding $B.$ 

\medskip
In the remaining part of the proof of Theorem \ref{main1}, we ignore the linear error term, it does not affect the arguments. As the last pruning step, we check the number of the remaining collinear triples between the $P_i, Q_j$ pairs for all $1\leq i,j\leq n/M.$ If the number of triples is less than $\frac{\delta}{8}M^2$ then remove all triples between $P_i$ and $Q_j.$ After this, we have left with at least $\frac{\delta}{8}n^2$ triples, and whenever there is a triple between two partition classes $P_i$ and $Q_j$ it is  $\frac{\delta}{8}$-good for $(i,j),$ and there are at least $\frac{\delta}{8}M^2$ triples between $P_i$ and $Q_j.$

\medskip
We can apply the Ruzsa-Szemer\'edi theorem for triple systems between all $P_i, Q_j$ pairs where we have at least one triple left between them. As we noted before, there are at least $\frac{\delta}{8}M^2$ triples between them, and the number of points from $B$ forming the triples is at most 
\[
\frac{M^2}{\frac{\delta}{8}M}  =\frac{8}{\delta}M
\]

By Theorem \ref{6:3}, we can set $M$ large enough such that if the number of triples is at least $\frac{\delta}{8}M^2$ on $\left(\frac{8}{\delta}+2\right)M$ vertices, then it contains a $(6,3)$ configuration. It is satisfied if we choose $M=\nu(\delta^3/128).$ We continue to work with this parameter $M.$
The number of triples is at least $\frac{\delta}{8}n^2$, so there are at least $\frac{\delta}{8M^2}n^2$ distinct $(i,j)$ pairs so that there is a $(6,3)$ configurations spanned by the triples between $P_i$ and $Q_j.$ For every $1\leq i\leq n/M$ we select two points from $P_i$ and two points from $Q_i$ such that the selected pairs span many $(6,3)$ configurations (together with the points of $B.$) Let's choose the pairs independently at random. The probability that both pairs of points of a $(6,3)$ configuration from $A$ and $C$ have been selected is $\binom{M}{2}^{-2}.$
So, there is a way to select $4n/M$ points, two from every partition class of $A$ and $C,$ such that the pairs which were chosen span at least $\frac{\delta}{4M^4}n^2$ distinct $(6,3)$ configurations. 
If the spanning vertices of such a configuration are denoted by $a_i,a_j,b_k,b_\ell,c_u,c_v$ then
$\max\{|i-j|,|u-v|\}\leq M.$

\medskip
What is left in the proof of Lemma \ref{skinny} is to show that there is a constant $N,$ such that many of the selected configurations satisfy $|k-\ell|\leq N.$ For this, we are going to use the ordering (labelling) of $B.$ In a $(6,3)$ configuration, there is one point in $B$ which is in two collinear triples, and the other in one triple. In at least half of the configurations, the order of the two types of points is the same. Let us suppose that the doubhas the smaller subscript in most of the  earlier selected triples and keep these  configurations only. 
We have at least $\frac{\delta}{8M^4}n^2$ configurations left.
Every $(6,3)$ configuration has its pair of points in $C$ in one of the $n/M$ pairs selected in the random process. If the number of configurations on a selected pair is less than half the average when it is less than $\frac{\delta}{16M^3}n,$ then throw them away. We have at least $\frac{\delta}{16M^4}n^2$ configurations left. Note that in any two configurations with the same pair of points in $C,$ where the vertices are denoted by $a_i,a_j,b_k,b_\ell,c_u,c_v$ and $a_{i'},a_{j'},b_{k'},b_{\ell'},c_u,c_v,$ the intervals $[k,\ell]$ and $[k',\ell']$ are disjoint, since the pairs, $a_i,a_j$ and $a_{i'},a_{j'}$ are in different partition classes. (Figure \ref{fig:two} ). 

\begin{figure}[h!]
\centering
\includegraphics[width=6cm]{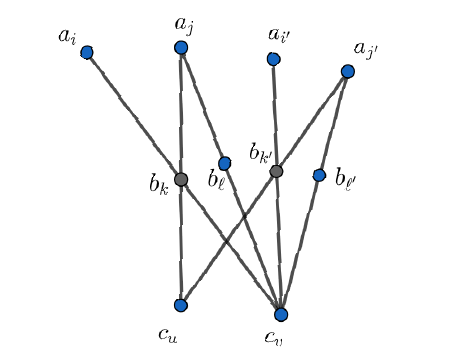}
\caption{The intervals $[k,\ell]$ and $[k',\ell']$ are disjoint }
\label{fig:two}
\end{figure}

Since these intervals of indices are disjoint, no pair in $C$ can hold more than $\frac{\delta}{32M^3}n,$ configurations where the two points  $b_k,b_\ell\in B$ are far, when $|k-\ell|\geq \frac{32M^3}{\delta}.$
This proves the lemma with parameters $N=\frac{32M^3}{\delta}$ and $\delta'=\frac{\delta}{16M^4}n^2,$ where $M=\nu(\delta^3/128).$
\qed
\end{proof}

\subsubsection{Using Ruzsa-Szemer\'edi again}

To complete the proof of Theorem \ref{main1}, we define a 3-uniform hypergraph, where the edges are the skinny $(6,3)$ configurations we are left with after the last step in the previous section. From the pointsets, $A$ and $B$, the vertices of the new hypergraph are the $2n/M$ pairs we selected in the random process. In pointset $B$, let's consider all pairs $b_k,b_\ell\in B,$ where $|k-\ell|\leq N.$ For each such pair, we can assign an integer, the number of $(6,3)$ configurations containing this pair of points. We follow a greedy algorithm to select vertex disjoint pairs contributing to many skinny $(6,3)$ configurations. 

\begin{enumerate}
    \item Select a pair $b_k,b_\ell$ with the highest assigned value, say $v$.
    \item Remove all pairs $b_{k'},b_{\ell'}$ where the $k=k'$ or $\ell=\ell'.$ There are less than $4N$ such pairs, and all removed pairs had a value less than $v.$
    \item If there are pairs left, go back to step one.
\end{enumerate}

The selected pairs will be the middle vertices of the triples (edges). Altogether we have at least $\frac{\delta'}{4N}n^2$ edges (triples) on at most $2n/M+n/2$ vertices. 
We can apply Theorem \ref{6:3} to conclude that if $n\geq \nu(\frac{\delta'}{4N})$ then it contains the $(12,9)$ configuration we were looking for (picture $c,$ in Figure \ref{fig:evol}).

\section{The Brown-Erd\H{o}s-S\'{o}s conjecture for collinear triples}
One of the most famous open problems in extremal combinatorics is the following:

\begin{conjecture}[Brown-Erd\H{o}s-S\'{o}s \cite{BES}]\label{conj:BES}
	Fix $m \geq 6$. For every $c>0$, there exists a threshold $N = N(c)$ such that if $H_n^{(3)}$ is a 3-uniform hypergraph on $n$ vertices  with $|S| \geq cn^2$, edges there exists a subset of $m$ vertices of $H_n^{(3)}$ which spans at least $m-3$ edges. (i.e. the hypergraph contains an $(m,m-3)$ configuration)
\end{conjecture}

About 50 years later, the only known case of Conjecture \ref{conj:BES} is when $m=6,$ Theorem \ref{6:3} above. We refer to \cite{SS} and \cite{CGLS} for some estimates and reviews of related works.  Conjecture \ref{conj:BES} has resisted all proof attempts in its general form. However, some special cases exist when even stronger statements can be proved. When the edges of the hypergraph are defined by a finite group with edges of the form $(a,b,a+b)$, the underlying structure helps to find vertices spanning many edges. There are recent works proving such results like in \cite{So2},\cite{CS},\cite{L},\cite{C}, and \cite{NST}. 

We proved the existence of $(12,9)$ configurations in large dense systems of collinear triples between three mutually avoiding pointsets. Removing points from this arrangement we have $(11,8),(10,7),(9,6)$ configurations, matching the numbers of the B-E-S conjecture. For larger values, we can get denser configurations, similar to the case of triples from finite groups. One way to prove such results is to iterate the method we used in the proof of Theorem \ref{main1}. We can find many skinny $(12,9)$ configurations and consider them as edges of a 3-uniform graph on some four-tuples of points as vertices, and then apply Ruzsa-Szemer\'edi again. This way we would get $3\cdot 9$ edges on $2\cdot 12$ vertices, a $(24,27)$ configuration. One would get $(3^{k+1}, 2^k3)$ configurations by further iterations. But this method involves the iteration of the Ruzsa-Szemer\'edi theorem, which gives a very poor bound. On the other hand, we can use some elements of the previous proof to get better bounds than what would follow from the B-E-S conjecture, even in sparse systems.

\begin{definition}
A $k$-complete collinear triple arrangement (or $k$-system in short) consists of $k$ disjoint $k$-element pointsets in $A,$ denoted by
$A_1=\{a_1^{(1)},\ldots,a_k^{(1)}\},\ldots, A_k=\{a_1^{(k)},\ldots, a_k^{(k)}\}$ and $k$ disjoint $k$-element pointsets in $C,$ denoted by
$C_1=\{c_1^{(1)},\ldots, c_1^{(k)},\ldots, c_k^{(k)}\}$ such that for every $1\leq i,j\leq k$ there is a point $b_{i,j}\in B$
such that for every $1\leq \ell \leq k$ the points $a_\ell^{(i)},$ $b_{i,j}$ and $c_{k-\ell+1}^{(j)}$  form a collinear triple.
\end{definition}

\begin{figure}[h!]
\centering
\includegraphics[width=6cm]{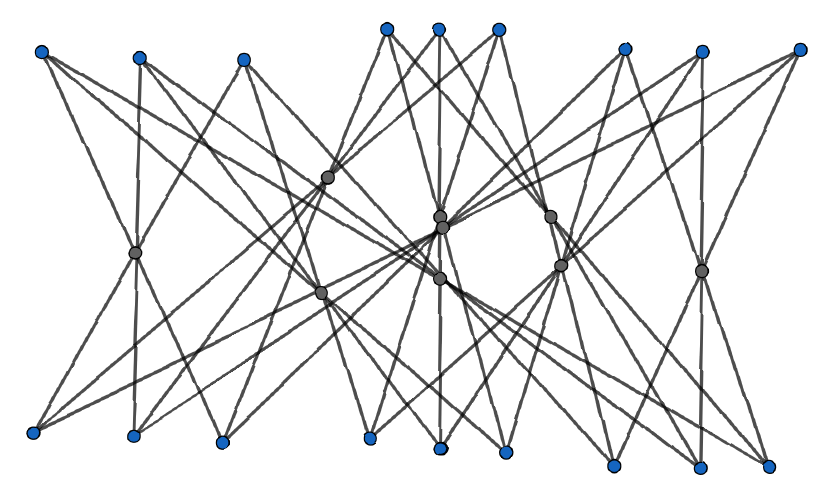}
\caption{This is a $3$-complete collinear triple arrangement (a 3-system) }
\label{fig:3-sys}
\end{figure}

A $k$-system has at most $3k^2$ points (some $b_{i,j}$ points might be the same) and $k^3$ edges.

\begin{theorem}\label{main2}
For any $k\in \mathbb{N}$ there are positive constants $\gamma_k >0,\mu_k>0$ and a threshold $n_0(k)$ such that the following holds. Let $A$ and $C$ be mutually avoiding $n$-element pointsets in the plane such that no line determined by $A$ or determined by $C$ intersects the convex hull of the $n$-element pointset $B$. Let us suppose that there are at least $ \mu_k n^{2-\gamma_k}$ collinear $(a,b,c)$ triples, where $a\in A, b\in B,$ and $c\in C.$ If $n\geq n_0,$ then there is a $k$-system among the collinear triples.
\end{theorem}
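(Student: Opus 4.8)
\textbf{Proof proposal for Theorem \ref{main2}.}

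The plan is to reuse the machinery from the proof of Theorem \ref{main1}, but to replace the second application of the Ruzsa--Szemer\'edi theorem by a direct supersaturation/dependent-random-choice argument that produces a full $k$-system, and to track the density loss carefully so that the hypothesis can be relaxed from $\delta n^2$ to $\mu_k n^{2-\gamma_k}$ triples. First I would run the pruning process exactly as in Lemma \ref{skinny}: partition $A$ and $C$ into blocks $P_i,Q_j$ of a constant size $M=M(k)$, and observe that Claim \ref{calc} (which uses only that $A$ and $C$ avoid $B$) survives verbatim, leaving a positive constant fraction of the triples that are $\eps$-good with $\eps=\eps(k)$. The point of $\eps$-goodness together with the three monotone orderings is that whenever a point $b\in B$ participates in a collinear triple between $P_i$ and $Q_j$, it participates in $\gs \eps M$ of them, and these triples are \emph{nested} in the sense dictated by the ordering conditions: the $a$'s and the $c$'s pair up in reversed index order, which is precisely the combinatorial skeleton of the definition of a $k$-system (the map $\ell\mapsto k-\ell+1$). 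So within a single good pair $(P_i,Q_j)$ the triples through one $b$ already look like one ``row'' $a_\ell^{(i)},b_{i,j},c_{k-\ell+1}^{(j)}$, $1\le \ell\le \Theta(M)$, of a $k$-system.

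The substantive step is then to glue $k$ such rows and $k$ such columns into a $k\times k$ grid of $b$'s. For this I would set up a bipartite-type incidence count: consider the $\Theta(n^2/M^2)$ good block-pairs $(P_i,Q_j)$, and for each, the set $B_{i,j}\sbeq B$ of $b$'s that are $\eps$-good for it. A $k$-system amounts to choosing index sets $I=\{i_1<\dots<i_k\}$ of $A$-blocks and $J=\{j_1<\dots<j_k\}$ of $C$-blocks such that every one of the $k^2$ pairs $(P_{i_s},Q_{j_t})$ is good \emph{and}, within each such pair, one can pick a common increasing chain of length $k$ of $a$'s and a reversed chain of $c$'s. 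The first requirement is a $K_{k,k}$ in the bipartite ``goodness graph'' on $A$-blocks versus $C$-blocks, which has $\Theta(n/M)$ vertices on each side and $\Theta(n^2/M^2)$ edges, hence a positive density; by the K\H{o}v\'ari--S\'os--Tur\'an bound (or dependent random choice) it contains a $K_{k,k}$ once $n$ is large, giving constants $\gamma_k,\mu_k$ from the KST threshold --- this is also where a density as low as $n^{2-\gamma_k}$ can still be afforded, since KST needs only $n^{2-1/k}$-ish many edges. The second requirement --- that inside each selected good pair the $\eps M$ triples through the various $b$'s contain a common length-$k$ chain of $a$-indices (equivalently $c$-indices, by the ordering) --- follows because each good $b$ hits $\gs\eps M$ of the $M$ possible $a$-positions, so any $k$ of them have a common hit-pattern of size $\gs \eps^k M$; choosing $M\ge k/\eps^k$ then forces a chain of length $k$ via pigeonhole over positions, and the reversal in the $c$-indices is automatic from the third ordering condition. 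Finally one uses a random/greedy choice of the actual pair of points inside each block, exactly as in the last paragraph of the proof of Lemma \ref{skinny}, to turn ``many configurations on few block-pairs'' into an honest $k$-system on $\le 3k^2$ named points.

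The main obstacle I expect is bookkeeping the geometric ordering so that the $a$-chains chosen independently in the $k^2$ different good pairs are mutually consistent --- i.e. that one can realise a single sequence $a_1^{(i)},\dots,a_k^{(i)}$ in block $P_i$ that works simultaneously against all $k$ chosen $C$-blocks. The ordering conditions say each pair $(P_i,Q_j)$ carries a monotone matching, but a priori the ``active'' $a$-positions in $P_i$ could differ from one $Q_j$ to the next; resolving this needs one more pigeonhole, applied after the $K_{k,k}$ is fixed, intersecting the $k$ relevant $\gs\eps M$-sized hit-sets inside $P_i$ to get a common core of size $\gs \eps^k M\ge k$, and then reading off the chain. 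Once $M$ is chosen larger than $k/\eps^k$ (with $\eps$ the goodness constant coming from $M$ itself --- a fixed-point that works because $M$ is allowed to depend on $k$ only), all these steps go through and the threshold $n_0(k)$, the exponent $\gamma_k$, and the constant $\mu_k$ come out of the KST estimate together with the constant-size losses in the pruning. \qed
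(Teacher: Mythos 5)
Your overall architecture (pruning to get good block pairs, an auxiliary bipartite graph, K\H{o}v\'ari--S\'os--Tur\'an to extract a $K_{k,k}$) is the same as the paper's, but the order in which you apply the two main steps creates a genuine gap. You first find a $K_{k,k}$ in the \emph{block}-goodness graph and only afterwards try to choose actual points inside the selected blocks; this forces you to argue that the $k$ ``hit-sets'' of active $a$-positions in a block $P_i$ (one for each of the $k$ chosen $C$-blocks) have a common core, and your claimed bound --- that $k$ subsets of $[M]$ each of size $\ge \eps M$ intersect in $\ge \eps^k M$ elements ``by pigeonhole'' --- is false. The only guaranteed lower bound is $M(1-k(1-\eps))$, which is vacuous for $\eps\le 1-1/k$; the $k$ hit-sets can be pairwise disjoint when $k\eps\le 1$, which is exactly the regime you are in. Even if a common core of $a$-positions existed, you would still need a single $b_{i,j}$ whose triples connect the \emph{chosen} $k$-tuple in $P_i$ to the \emph{chosen} $k$-tuple in $Q_j$ for every one of the $k^2$ pairs simultaneously, and nothing in your argument produces that. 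The paper avoids the consistency problem entirely by reversing the order: it first selects, independently at random, one $k$-tuple of points in every block, and only then builds the bipartite graph whose vertices are these named $k$-tuples, with an edge recording a $k$-branch realized on exactly those $2k$ points. A $K_{k,k}$ in that graph \emph{is} a $k$-system, with the reversal $\ell\mapsto k-\ell+1$ supplied automatically by the ordering conditions; the price of the random selection is the factor $\binom{M}{k}^{-2}$ in the edge count, and that is precisely what dictates $\gamma_k\le \frac{1}{k^2-k+1}$ via KST.

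A secondary but real issue is your parameter bookkeeping. You take the block size $M=M(k)$ to be a constant and assert that a constant fraction of triples become $\eps(k)$-good, but the hypothesis only provides $\mu_k n^{2-\gamma_k}$ triples, so the effective density is $\delta=\mu_k n^{-\gamma_k}\to 0$; with constant $M$ the goodness threshold $\tfrac{\delta}{8}M$ drops below $1$ and ``good'' becomes vacuous. The paper instead sets $M=n^{\gamma_k}$ and $\tfrac{\delta}{8}M=k$, so that a good block pair still carries at least $k$ triples through a common $b$ (a $k$-branch). Your proposed ``fixed point'' for $\eps$ and $M$ depending on $k$ only does not exist in the sub-quadratic regime; $M$ must grow with $n$.
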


\begin{proof}
We will use the calculations and notations leading to Claim \ref{calc} in the proof of Theorem \ref{main1}. There are at least $\frac{\delta}{4}n^2 -\frac{\delta}{8} Mn$ triples that are $\frac{\delta}{8}$-good among $\delta n^2$ triples. Set $k=\frac{\delta}{8}M, \mu_k=8k$ and $M=n^{\gamma_k}$. 
We will specify the value of $0<\gamma_k<1$ later. If it is small enough, we can still assume that $M$ divides $n,$ so the calculations before Claim \ref{calc} can be repeated with a non-constant $M$.
 (If the number of triples in the arrangement was $T,$ then we can throw away a few points and less than $2T/n^{1-\gamma_k}$ triples adjacent to them before starting the proof)
With this setup  $\delta=\frac{8k}{n^{\gamma_k}}$ and 
at least $2kn^{2-\gamma_k}-kn$ triples are $\frac{k}{M}$-good 
since $\mu_k n^{2-\gamma_k}=\delta n^2.$

We now estimate the number of $k$-branches, the building blocks of $k$-systems. The $k$-branches are spanned by the $P_i, Q_j$ partition classes for some $1\leq i,j\leq n^{1-\gamma_k}.$ 
They are formed by $k$ triples, $(a,b,c)$ sharing the same $b\in B$  where $a\in P_i$ and $c\in Q_j.$ There are at least $2n^{2-\gamma_k}-n$ such branches. 
Let's choose $k$ points from each $P_i$ and $Q_j$ independently at random. If the number of $k$-branches between $P_i$ and $Q_j$ is denoted by $K_{ij}$ then the probability of selecting the $2k$ points of a $k$-branch
in $A$ and $B$ is $K_{ij}\binom{M}{k}^{-2},$ so there is a selection of $k$-tuples where the number of $k$-branches is at least

\begin{equation}\label{k-branch}
\begin{split}
\sum_{1\leq i,j\leq n^{1-\gamma}} K_{ij}\binom{M}{k}^{-2}\geq \left(2n^{2-\gamma_k}-n\right)\binom{M}{k}^{-2} \\
\geq k!^2\left(2n^{2-(2k+1)\gamma_k}-n^{(1-2k\gamma_k)}\right)
\geq k!^2n^{2-(2k+1)\gamma_k}.
\end{split}
\end{equation}

Let us define an auxiliary bipartite graph where the vertices are the selected $k$-tuples of points, and there is an edge between two if there was a $k$-branch selected on the $2k$ points.
This graph has $n^{1-\gamma_k}$ vertices in each vertex class and at least ${k!}^2n^{2-(2k+1)\gamma_k}$ edges. By the K\H{o}v\'ari-S\'os-Tur\'an theorem \cite{KST} if
\[
(1-\gamma_k)\left(2-\frac{1}{k}\right)\leq 2-(2k+1)\gamma_k \quad \text{i.e. when}\quad \gamma_k\leq \frac{1}{2k^2-k+1,}
\]
then the graph contains a complete bipartite graph $K_{k,k}$ which proves the existence of a $k$-system among the triples.
\qed
\end{proof}

With the same $\gamma_k$ exponent, by increasing the constant multiplier, $\mu_k,$ a larger bipartite graph,  $K_{k,t}$ will appear in the bipartite graph. Using a larger $t$, one can guarantee a $k$-system where all $b\in B$ vertices are distinct, so the system contains exactly $3k^2$ points. It is a particular $(3k^2,k^3)$ configuration, which is an 
unavoidable collinear triple configuration, even in sparse systems. 

\section{When one set of points is collinear}\label{collinear}
In this section, we assume that set $C$ consists of $n$ collinear points, the convex hulls of $A$ and $B$ are not intersected by the line of $C$, and no line determined by the points of $A$ intersects the convex hull of $B$.
Let us suppose that that there are $\delta n^2$ collinear $(a,b,c)$ triples, where $a\in A, b\in B,$ and $c\in C.$
In the case when $A$ is also a set of collinear points, Elekes proved that a positive fraction of the points of $B$ should also be collinear \cite{El}.
Elekes and Szab\'o proved the stronger statement that if the points of $A\cup C$ are on a conic, then a positive fraction of the points of $B$ is collinear (Theorem 5.2 in \cite{ESz}).
Our main result in this section is to show that if $C$ is collinear, then---under some additional conditions---many points of $A$ and  $B$ are on a conic. 
Let's recall another conjecture of Elekes, similar to the earlier ones:

\begin{conjecture}[Elekes \cite{E3}]\label{Elekes6} For every $C > 0$ there is an $n_0 = n_0 (C)$ with the following property. 
If $A\subset \mathbb{R}^2,$ $|A|\geq n_0$ and the number of directions determined by $A$ is at most $C|A|$ then $A$ contains six points of a (possibly degenerate) conic. 
(As usual, a pair of lines is considered a degenerate conic.) 
\end{conjecture}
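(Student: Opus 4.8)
The plan is to transport the problem into the collinear-triple framework of Section 2 by passing to the projective plane and adjoining the line at infinity $\ell_\infty$. Write $n=|A|$, and recall that $A$ determines at most $Cn$ directions. Each such direction is a point of $\ell_\infty$; let $\Pi\subset\ell_\infty$ be the set of these direction points, so $|\Pi|\le Cn$. Every pair of points of $A$ lies, together with its direction point, on a common projective line, so each pair produces a collinear triple consisting of two points of $A$ and one point of $\Pi$. The decisive observation is that, since $\Pi$ lies on the single line $\ell_\infty$, \emph{any} three of its points are collinear for free. Consequently, if one can produce a tic-tac-toe whose six collinear triples each use exactly one point of $\Pi$, then the three $\Pi$-vertices are forced to be a transversal of the $3\times 3$ array and are automatically collinear; the converse of Pascal's theorem (the Braikenridge--Maclaurin construction) then places the remaining six vertices — all of which are points of $A$ — on a (possibly degenerate) conic. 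This is exactly the desired conclusion, and crucially it needs no extra collinearity input because $\Pi$ supplies it.

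First I would dispose of the degenerate cases directly. If some line carries at least five points of $A$ together with any further point, or if two distinct lines each carry at least three points (six points in all), then these six points already lie on a pair of lines and we are finished. So I may assume $A$ is in sufficiently general position that no line is too rich and the $\binom n2$ pairs are not concentrated on a single line. To fit the three-partite machinery of Theorem \ref{main1}, I would split $A$ into two parts $A',B'$ and retain only the triples $(a',b',\pi)$ with $a'\in A'$, $b'\in B'$, $\pi\in\Pi$; a balanced split makes a positive fraction of the pairs cross, yielding $\delta n^2$ such triples for some $\delta>0$. After a projective map sending $\ell_\infty$ to a finite line, $\Pi$ takes on the role of the collinear set $C$ in the setup of Section 4, so that the three $\Pi$-vertices of any tic-tac-toe are collinear as required.

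With $A',B',\Pi$ arranged so that Theorem \ref{main1} applies, its conclusion furnishes a tic-tac-toe among the selected collinear triples. Because the hypergraph is three-partite, each of the six row- and column-edges contains exactly one vertex of $\Pi$, forcing the three $\Pi$-vertices to occupy a transversal of the $3\times 3$ array; hence they are pairwise independent and collinear on $\ell_\infty$. By the first paragraph, the six $A'\cup B'$-vertices — six distinct points of the original set $A$ — then lie on a conic, which proves the statement.

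The hard part, and the reason this appears here only as a conjecture, is the geometric hypothesis of Theorem \ref{main1}: it demands that $A',B',\Pi$ be mutually avoiding, whereas Elekes' conjecture imposes no position assumption on $A$ at all. A general few-direction set need not split into two large mutually-avoiding parts, and the guarantee of \cite{AEP} only produces mutually-avoiding subsets of size $\sqrt{n/12}$; restricting to subsets of that size forces the direction set $\Pi$ (of size up to $Cn$) to dwarf the subsets, which collapses the triple density far below the Ruzsa--Szemer\'edi threshold. Thus the genuine obstacle is to run the pruning-and-tic-tac-toe argument of Theorem \ref{main1} \emph{without} the mutually-avoiding condition, while keeping the three sets balanced enough — $|\Pi|=O(n)$ against parts of size $\Theta(n)$ — for the density to survive. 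The conditional form of the conjecture, obtained by adding convex-position or avoiding hypotheses as in the last section of this paper, is within reach of the present method; removing those hypotheses to obtain the unconditional statement is precisely where a new idea seems to be required.
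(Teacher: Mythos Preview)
The statement you are addressing is Conjecture~\ref{Elekes6}; the paper does \emph{not} prove it, and your proposal does not either --- as you yourself acknowledge in the final paragraph. So this is not a proof but a (largely accurate) diagnosis of why the paper's machinery falls short of the unconditional statement. Your reduction --- replace directions by their points on $\ell_\infty$, split $A$ by a line, and look for a tic-tac-toe whose three $\Pi$-vertices are automatically collinear so that Pascal forces the remaining six onto a conic --- is exactly the geometric idea the paper advertises in the introduction (Figure~\ref{fig:Pascal}) and exploits in Section~4 to obtain the conditional Theorem~\ref{cor:elekes} under a convex-position hypothesis.

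There is one further gap you do not flag. You propose to invoke Theorem~\ref{main1}, which requires the three sets to be \emph{mutually avoiding}. Even when $A$ is in convex position and split by a line into $A',A''$, the triple $(A',A'',\Pi)$ need not be mutually avoiding: a line through two points of $A'$ will typically meet $\ell'$, possibly inside $\mathrm{conv}(\Pi)$. This is why the paper does \emph{not} go through Theorem~\ref{main1} for the conditional result; it instead proves and applies Lemma~\ref{main3}, whose hypotheses (no $A$-line meets $\mathrm{conv}(B)$; no similar triples in $A$ relative to the line of $C$) are strictly weaker than mutual avoidance and are genuinely available when $A$ is convex. Your tic-tac-toe route would give only six conconic points, whereas Lemma~\ref{main3} yields $\gamma n$; but more to the point, your route as written does not actually close even the convex case. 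For the unconditional Conjecture~\ref{Elekes6}, your concluding assessment is correct: removing the position hypotheses is precisely the missing idea, and the paper leaves it open.
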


\noindent
In his classical paper {\em Sums vs Products} \cite{E_SP} Elekes writes the following about his conjecture: 

\medskip
{\em ``It is very likely that Conjecture \ref{Elekes6} holds for any number in place of six, for 
$|A|$ large enough. It was pointed out by M. Simonovits that one cannot expect $c^*|A|$ conconic points in general (as shown by a square lattice.) However, some $|A|^\alpha$ such points may exist for a suitable $\alpha=\alpha(C)$. Perhaps even $c^*|A|$ can be found, provided that $A$ is the vertex set of a convex polygon.''}

\medskip
\noindent
We will show a stronger, density version of this later statement.

\begin{theorem}\label{cor:elekes}
For any $D>0$ and  $c>0$, there is a constant, $\delta>0,$ and a threshold $n_0=n_0(c,D)$, so the following holds. Let $A$ be a set of $n$ points in the plane in convex position (the vertex set of a convex polygon) such that at least $cn^2$ point pairs determine at most $Dn$ distinct directions. If $|A|\geq n_0$, then at least $\delta n$ points of $A$ are on a conic. 
\end{theorem}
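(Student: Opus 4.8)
The plan is to reduce Theorem \ref{cor:elekes} to Theorem \ref{main1} by manufacturing three mutually avoiding point sets out of the single convex set $A$ together with the line at infinity, and then to read off a conic from a tic-tac-toe configuration via Pascal's theorem. First I would encode the hypothesis on directions: since $A$ is in convex position, order its points $a_1,\dots,a_n$ cyclically. A pair $(a_i,a_j)$ determines a direction, and the hypothesis says that there is a set $\mathcal{D}$ of at most $Dn$ directions carrying at least $cn^2$ of these pairs. Now split $A$ into two arcs $A'$ and $A''$ of roughly $n/2$ consecutive vertices each; by convexity a positive fraction (at least $c' n^2$ for some $c'=c'(c)$) of the counted pairs have one endpoint in each arc, and moreover the line through such a pair separates the two arcs, so $A'$ and $A''$ are ``mutually avoiding'' in the relevant sense. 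Treat the set $C$ of points at infinity corresponding to the directions in $\mathcal{D}$ as the third set; it is collinear (it lies on the line at infinity), it has at most $Dn$ points, and the line at infinity misses the convex hulls of $A'$ and $A''$, while every line through a point of $A'$ (or $A''$) meets the line at infinity in a single point and hence does not ``cross'' the bounded convex hulls. After possibly discarding a few directions of very low multiplicity and duplicating/relabelling so that all three sets have the same cardinality $m = \Theta(n)$ (padding $C$ with dummy far points if $|C| < m$, or using a weighted version of the counting), we obtain three mutually avoiding $m$-element sets with at least $\delta_1 m^2$ collinear triples $(a,b,c)$, $a\in A'$, $b\in A''$, $c\in C$, where each such triple records that $a,b$ have a direction lying in $\mathcal{D}$.

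Next I would apply Theorem \ref{main1} (with $\delta = \delta_1$) to get a tic-tac-toe among these triples, provided $m$, hence $n$, is large enough. A tic-tac-toe here consists of nine points — three in $A'$, three in $A''$, and three in $C$ — with the six incidence lines of a $3\times 3$ board. Because the three ``$C$-points'' are points at infinity, the three lines of one family are the three lines with a fixed common direction; the combinatorics of the board then says precisely that we have points $a_1,a_2,a_3 \in A'$ and $b_1,b_2,b_3 \in A''$ such that $a_ib_j$ has direction $d_{k}$ whenever $i+j \equiv$ (something) — i.e., a $3\times 3$ grid of segments using only $3$ directions. Now I invoke the geometric fact highlighted in the introduction (Figure \ref{fig:Pascal} and the discussion of Pascal's theorem): in a tic-tac-toe of collinear triples, if three pairwise independent points are collinear then the remaining six are on a conic. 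Here the three $C$-points are automatically collinear (they lie on the line at infinity) and pairwise independent, so the six points $a_1,a_2,a_3,b_1,b_2,b_3 \in A$ lie on a common conic. That already yields six conconic points of $A$; to upgrade to $\delta n$ points I would iterate: after removing these six points the direction hypothesis still holds with a slightly smaller constant, so one can repeat, but a cleaner route is to run a ``supersaturation'' version — Theorem \ref{main1} in fact shows (as the author remarks) that any $\delta m^2$-subset of the triples contains a tic-tac-toe, so a standard counting/greedy argument produces $\Omega(m^2)$ edge-disjoint tic-tac-toes, hence $\Omega(m)$ of them pairwise sharing at most a bounded number of $A$-points, and a pigeonhole on which conic they determine gives $\delta n$ points of $A$ on one conic.

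The main obstacle, I expect, is the bookkeeping that converts ``few directions'' into genuine mutually avoiding sets with the three cardinalities matched and $\Theta(n^2)$ collinear triples surviving. Three sub-points need care: (i) the split of $A$ into two arcs must be chosen so that the surviving cross-pairs are a constant fraction — this uses convex position essentially, and one must check that no line through two points of the same arc crosses the other arc's hull, which is where ``convex polygon'' (rather than just ``no three collinear'') is used, exactly as flagged by the square-lattice counterexample in the introduction; (ii) the third set $C$ genuinely has at most $Dn$ points but we need $m$-element sets, so one either works with a weighted Ruzsa–Szemerédi-type statement or blows up $C$ by taking $D$ copies conceptually and arguing the triple count is robust — I would phrase Theorem \ref{main1} is applied with $\delta$ depending on both $c$ and $D$; (iii) the last iteration/supersaturation step needs the observation, stated after Theorem \ref{main1}, that the conclusion survives passing to any dense subset of the triples, so that removing previously-used points does not destroy the hypothesis. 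None of these is deep, but getting the constants to line up — $\delta = \delta(c,D)$ and $n_0 = n_0(c,D)$ — is the part that must be done carefully rather than waved through.
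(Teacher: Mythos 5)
Your reduction (split the convex set into two arcs $A'$, $A''$ by a line, send the line at infinity to a real line $\ell'$, and let $C$ be the at most $Dn$ direction points on $\ell'$) is essentially the paper's opening move, and your observation that a tic-tac-toe with the three $C$-points collinear yields, via Pascal, six points of $A$ on a conic is correct. But the theorem asks for $\delta n$ points on a \emph{single} conic, and your upgrade step fails. Producing $\Omega(n^2)$ edge-disjoint tic-tac-toes only gives $\Omega(n^2)$ six-point sets, each lying on \emph{its own} conic; since a conic is determined by five points, two tic-tac-toes sharing at most four of their conconic points generically determine distinct conics, so the family of candidate conics can be as large as the family of tic-tac-toes and the pigeonhole has nothing to act on. Nothing in your argument forces any conic to contain more than six points of $A$. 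This is exactly the obstruction (Simonovits's square-lattice remark, quoted in Section 4) that makes the density statement harder than the ``six conconic points'' conjecture.

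The paper gets around this with Lemma \ref{main3} rather than Theorem \ref{main1}: after the pruning of Claim \ref{calc} one builds the bipartite graph of $3$-branches on triples of partition classes, finds \emph{two fixed triples} of points of $A'$ with $\Omega(n)$ common neighbours among triples of $C$, and then shows by the explicit determinant computation (\ref{curve}) that all the corresponding middle points in $A''$ satisfy one quadratic equation whose coefficients depend only on the two fixed triples --- so they all lie on the \emph{same} conic. Convex position enters precisely to exclude the degenerate case (\ref{sim_points}) of two similar triples, where that quadratic vanishes identically. If you want to salvage your route, you would need to force linearly many of your tic-tac-toes to share five or more conconic points, which is in effect a restatement of what Lemma \ref{main3} proves; the Ruzsa--Szemer\'edi/Pascal argument alone does not deliver it.
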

Theorem \ref{cor:elekes} is a direct corollary of Lemma \ref{main3} below. Before proving Elekes' conjecture, we recall a definition from projective geometry.

\begin{definition}
Two triangles are said to be perspectives from a line if the three points of intersections of corresponding sides all lie on a common line.
\end{definition}

For some restricted arrangements of points in a convex position, we use the following standard definition 

\begin{definition}
A set of $k$ points is a $k$-cap relative to a line $\ell$ if the points are in convex position and the convex hull of the $k$ points and their orthogonal projections onto $\ell$ contains all $2k$ points on the boundary.
\end{definition}

The points of a $k$-cap have a natural ordering based on the order of their projections onto $\ell$. Using the ordering of the projection points from left to right (say), we write $p_1<p_2<\ldots<p_k$ to indicate the ordering of the points $p_i$.

\begin{figure}[h!]
\centering
\includegraphics[width=14cm]{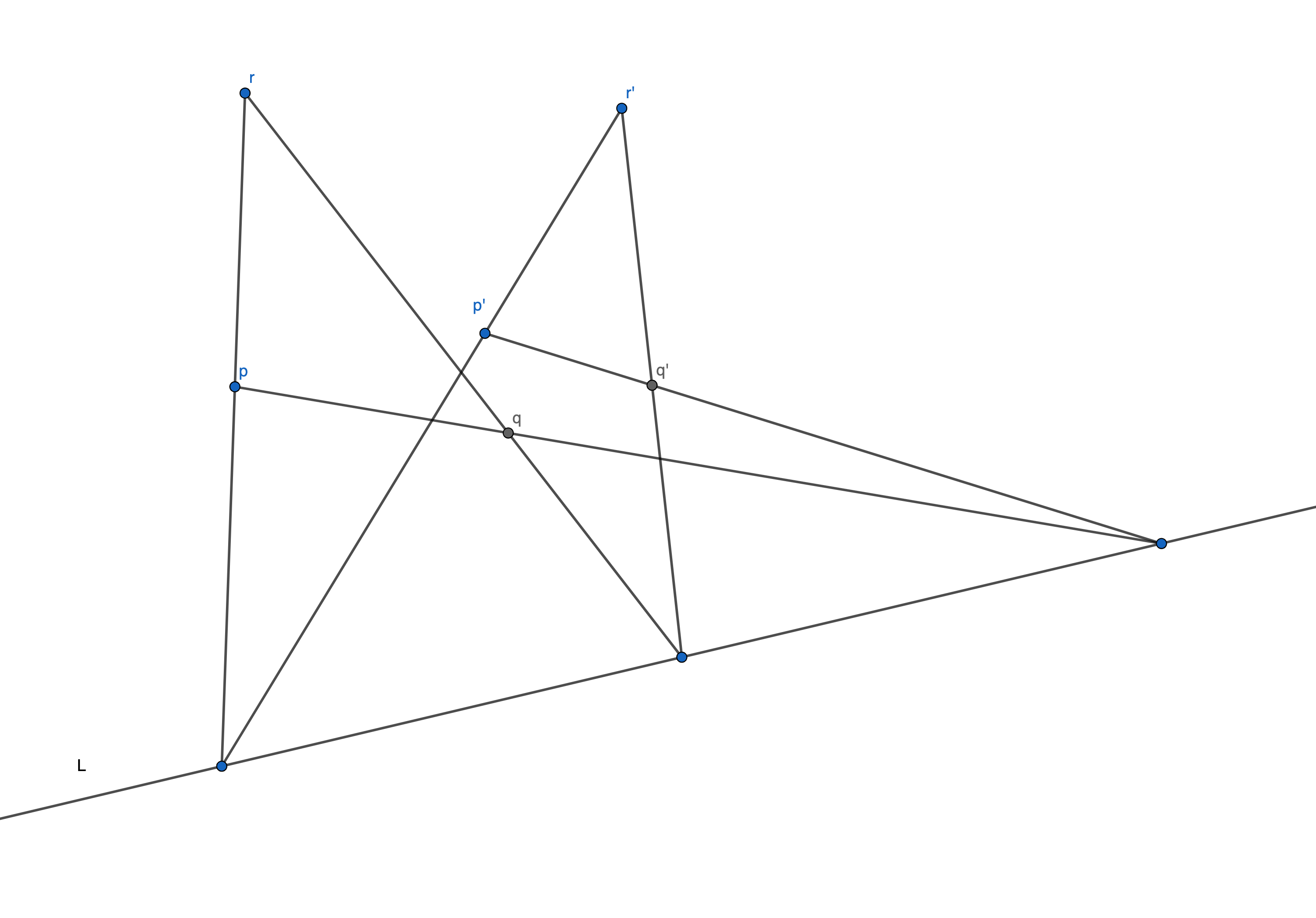}
\caption{ $(p,q,r)$ and $(p',q',r')$ are perspective from $L$ }
\label{fig:sim}
\end{figure}

\noindent

\begin{claim}
    Let $p_1<p_2<p_3<p_4<p_5<p_6$ be a $6$-cap relative to a line $\ell$. Then the two triangles spanned by $p_1,p_2,p_3$ and $p_4,p_5,p_6$ are not perspective from $\ell$.
\end{claim}

\begin{proof}
    Consider four points forming a $4$-cap relative to $\ell$. Denote them as  $q_1<q_2<q_3<q_4$. The intersection point of the two lines containing $\overline{q_1q_2}$ and $\overline{q_3q_4}$ is denoted by $x$. Let us suppose $x\in\ell$ and let $q_i'$ denote the projection of $q_i$ onto $\ell$. We consider two cases
    \begin{itemize}
        \item $x>q_2'$. In this case, $q_2\quad (<q_3)$ is not on the boundary of the convex hull of the points\\ $q_1,q_2,q_3,q_4,q_1',q_2',q_3',q_4'$ which contradicts the assumption that the four points formed a $4$-cap relative to $\ell$.
        \item $x<q_3'$. In this case, $q_3\quad (>q_2)$ is not on the boundary of the convex hull of the points\\ $q_1,q_2,q_3,q_4,q_1',q_2',q_3',q_4'$ which is a contradiction as in the previous case.
        \end{itemize}
    \qed
    
    \end{proof}

\medskip
In the first section, we proved Theorem \ref{main1}, which implies that if $C$ is a set of collinear points, then many 6-tuples of points from $A\cup B$ are on a conic. Now we have all the tools to show that a positive fraction of the points in $A\cup B$ is on a conic under some additional conditions. 

\begin{lemma}\label{main3}
For any $\delta>0$ constant there is a threshold $n_0=n_0(\delta)$ and $\gamma=\gamma(\delta)$ so the following holds. Let $A,B,C$ be $n$-element pointsets in the plane. Assume
that $n > n_0$ and
\begin{enumerate}
    \item $C$ is a set of collinear points,
    \item the convex hulls of $A$ and $B$ are not intersected by the line of $C,$
    \item no line determined by the points of $A$ intersects the convex hull of $B$. This condition allows us to label the elements of $A$ as $a_1,a_2,\ldots,a_n$, in the order they appear in a clockwise sweep through a point of $B$. (As we did at the beginning of the proof of Theorem \ref{main1} in section \ref{sec_proof})
    \item no three points are collinear in $A$, 
    \item and no two triples, $a_i,a_j,a_k$ and $a_\ell,a_f,a_g$ are perspective from the line of $C$ if\\ $i<j<k<\ell<f<g.$
    \item there are at least $\delta n^2$ collinear $(a,b,c)$ triples, where $a\in A, b\in B$ is between $a$ and $c,$ where $c\in C.$ 
\end{enumerate}
Then at least $\gamma n$ points of $ B$ are on a conic.
\end{lemma}

\medskip
\noindent
Before the proof, we give an example to show that something like conditions 4 and 5 is necessary for the conclusion. The example consists of two copies of a projection of the $d$-dimensional integer grid $[m]^d,$ where $m\sim (n/2)^{1/d},$ and the line at infinity, $C$ is the points of directions. Take a generic projection of $[m]^d$ onto the plane, and denote its points by $G$. Choose a translation of $G,$ denoted by $G',$ such that $G$ and $G'$ are mutually avoiding pointsets. We can select many point pairs, one from $G$ and one from $G'$, such that these pairs determine a few distances only. Define a set $S\subset [m]^d$ as points with all coordinates being far from the extremes. A point, $s=(s_1,s_2,\ldots,s_m),$ is in $S$ iff $\frac{m}{4}\leq s_i\leq \frac{3m}{4}$ for every $1\leq i\leq m.$ With these parameters, we have 

\[|S|\sim \left(\frac{m}{2}\right)^d\sim\frac{n}{2^{d+1}}.\]

\noindent
The points of the image of $S$ in $G$ and $G'$ are denoted by $G_S$ and $G'_S.$ Any direction determined by two points $p\in G_S$ and $q\in G'_S$ appears with multiplicity at least $\left(\frac{m}{2}\right)^d$ between points of $G$ and $G'.$ Select these directions (points) for set $C.$ For the size of $C$ the following rough bound holds:

\[
|C|\leq {\left(\frac{n}{2}\right)^2}\left({\frac{n}{2^{d+1}}}\right)^{-1}=2^{d-1}n.
\]

\noindent
For a fixed constant, $d\in \mathbb{N},$ all conditions of Lemma \ref{main3} hold except conditions 4 and 5, and there is no conic containing more than $n^{1/d}$ points from $G\cup G'.$
In this construction, there are many similar triangle pairs and collinear triples.

\subsection{The proof of Lemma \ref{main3}}

In the proof, we refer to the calculations preceding Claim \ref{calc}. Based on the collinear triples, we will define a bipartite graph formed by 3-branches in a similar way as we did in the proof of Theorem \ref{main2}. This bipartite graph is dense, so we will find a $K_{2,\ell}$ subgraph where $A$ contains two vertices (quadruples) and $C$ contains $\ell\sim cn$ vertices. We will show then that the points of the corresponding $b\in B$ points are on a low-degree algebraic curve. We will follow the exponents of $\delta$ during the calculations in case one is interested in conclusions similar to Lemma  \ref{main3} when $\delta=n^{\varepsilon}$ for some $\varepsilon >0.$ 

\subsubsection{The auxiliary bipartite graph}
We will use the same parameters we had in the previous sections. By Claim \ref{calc} at least 

\begin{equation}\label{goodtriples}
    \frac{\delta}{4}n^2 -\frac{\delta}{8} Mn
\end{equation}
triples are $\frac{\delta}{8}$-good for the relevant $(i,j)$ pairs. We choose the parameter $M$ to be the least integer such that $\lfloor\frac{\delta M}{8}\rfloor\geq 3, $ so if there is a triple between a $P_i,Q_j$ pair after  the pruning process before Claim \ref{calc}, then there is at least one 3-branch between $P_i$ and $Q_j.$ We choose independently at random a triple of points in each $P_i$ and $Q_j$ ($1\leq i,j\leq n/M$). Since $M\leq \frac{24}{\delta}+1,$ a constant nicely depending on $\delta$, we can ignore the linear term in (\ref{goodtriples}) which is at most about $3n$. There are at least $\frac{\delta}{12}n^2$ distinct (but not necessarily vertex disjoint) 3-branches. The expected number of selected 3-branches - when both the top and bottom triples were selected - is at least 

\begin{equation}
    \binom{M}{3}^{-2}\frac{\delta}{12}n^2 >c_1\delta^7n^2,
\end{equation}

\noindent
with a universal constant $c_1>0.$ Now we are ready to define the auxiliary bipartite graph, $G(P,Q)$. The vertices are the selected triples in $P_i$-s and $Q_j$-s. There are $2\times n/M$ such triples, so both vertex classes of the bipartite graph $G(P,Q)$ have size about $\frac{\delta n}{24},$ denoted by $N.$ 
Two vertices, given by $P_i$ and $Q_j$, are connected by an edge if the selected triples span a 3-branch. 

There is a choice of triples such that the number of edges in $G(P,Q)$ is at least the expected number of edges in the random selection, it is at least $c_2\delta^5N^2.$ If $n$ is large enough then there are two vertices in $P$ with at least $c_3\delta^{10}N$ common neighbours in $Q.$ Indeed, the number of 3-paths with the middle vertex in $Q$ is

\[
\sum_{v\in Q}\binom{deg(v)}{2}\geq N\binom{c_2\delta^5N}{2}\approx \frac{c_2^2\delta^{10}N^3}{2}   .
\]

\noindent
There is a pair, $v,w\in P$, out of the $\binom{N}{2}$ vertex-pairs wich are the end-vertices of at least $\sim c_2^2\delta^{10}N$ different 3-paths. The two vertices, $v$ and $w$ have at least $\sim c_2^2\delta^{10}N$ common neighbours.

\medskip

Let us denote the two triples of points corresponding to $v$ and $w$ in $A$ by $(\alpha_1,\beta_1),(\alpha_2,\beta_2),(\alpha_3,\beta_3)$\\
and $(a'_1,b'_1),(a'_2,b'_2),(a'_3,b'_3).$ Since we selected one triple from the $P_i$ sets, 
we can assume that\\ $\alpha_1<\alpha_2<\alpha_3<a'_1<a'_2<a'_3.$ 
The corresponding triples in $Q$ are denoted by $(s'_1,t'_1,u'_1),\ldots ,(s'_m,t'_m,u'_m)$ where $m\geq c_3\delta^{10}N.$

\subsubsection{Finding the algebraic curve in $B$}
To simplify the remaining calculations, let us apply a projective transformation, denoted by $T_1$, which sends the first three points, $(\alpha_1,\beta_1),(\alpha_2,\beta_2),(\alpha_3,\beta_3),$ to the pre-set coordinates $(1,1),(0,1),(0,2)$, and  the image of line $C$ is the $x$-axis. A projective transformation could change convexity and other geometric relations among the points, but it preserves collinearity, and this is the only property we are using below.


\noindent

\begin{figure}[h]
\begin{tabular}{ll}
\includegraphics[width=8cm]{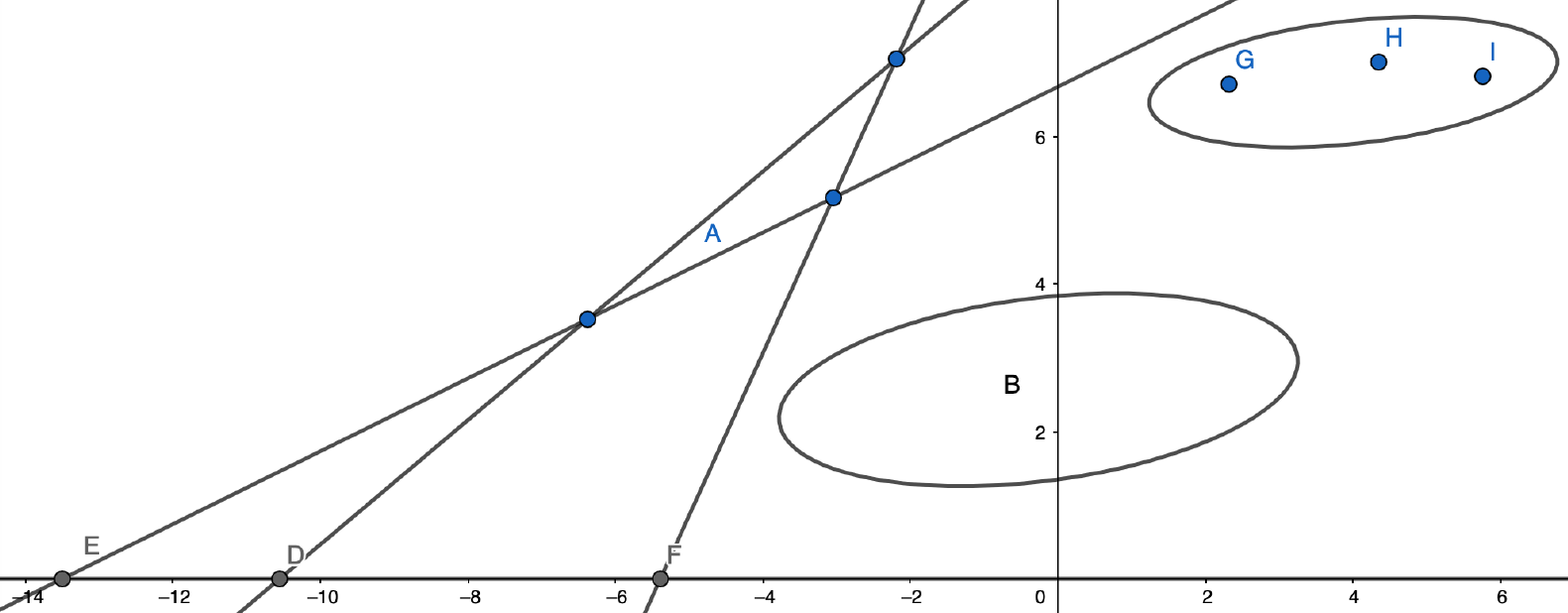}
&
\includegraphics[width=6cm]{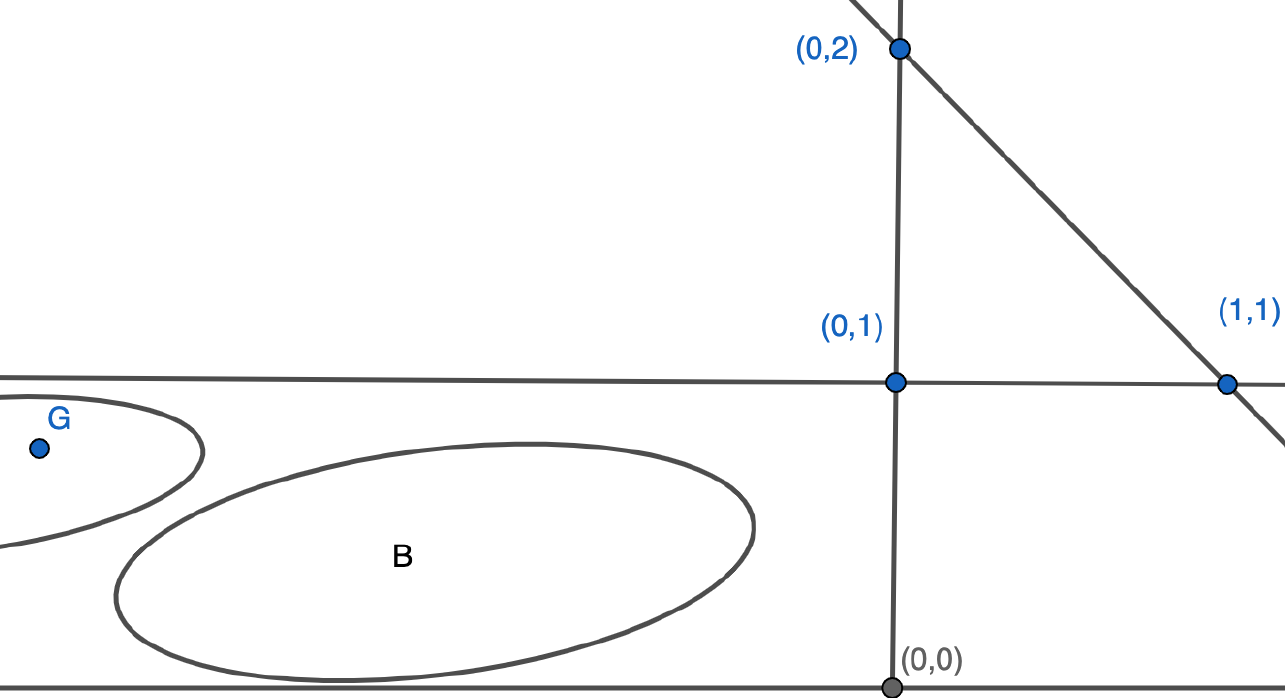}
\end{tabular}
\caption{The vertices of triangle $A$ are mapped to $(1,1),(0,1),(0,2)$ }
\label{fig:masodik}
\end{figure}

\medskip

\begin{figure}[h]
\centering
\includegraphics[width=10cm]{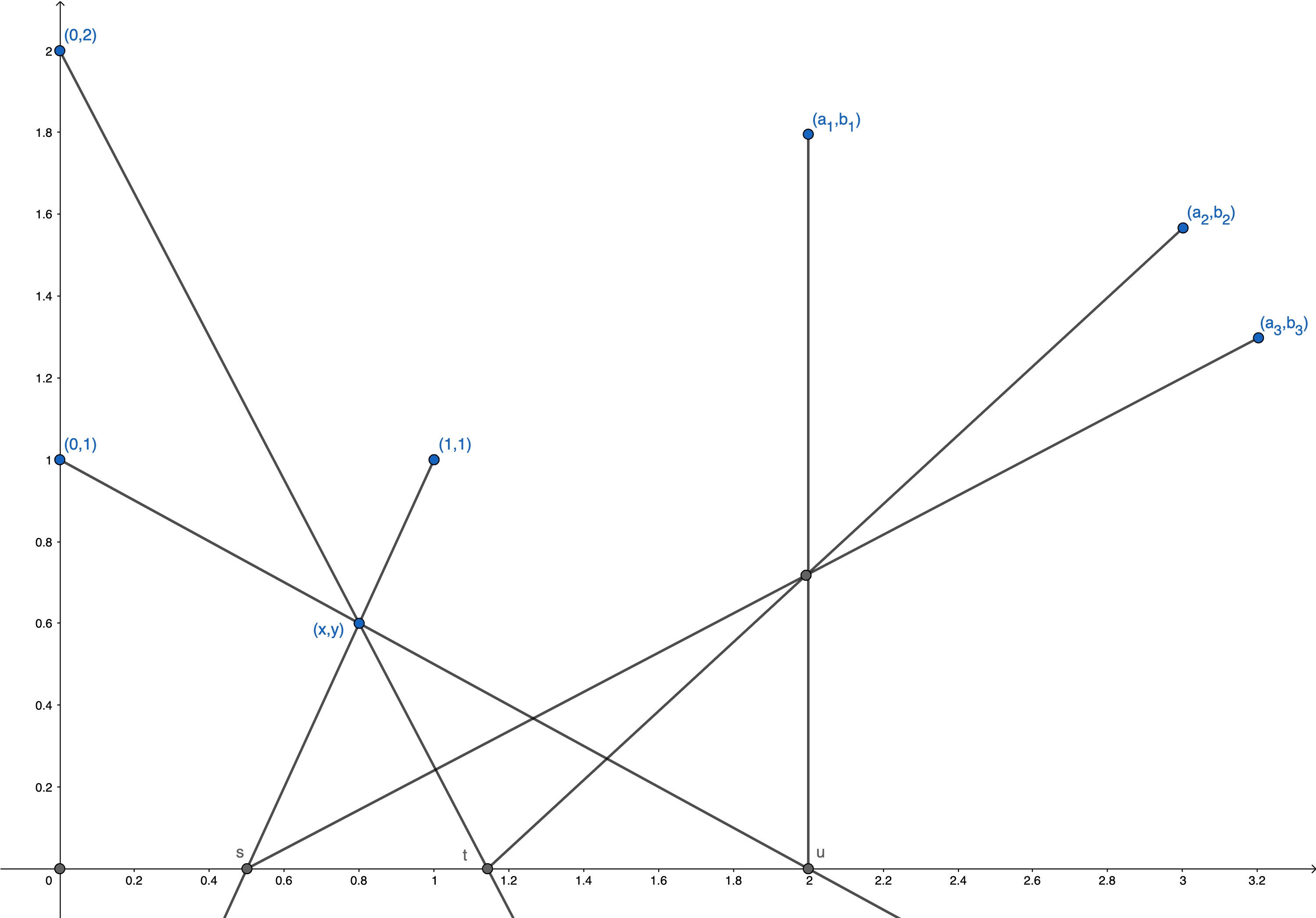}
\caption{With the right selection of points, $(x,y)$ moves along an algebraic curve}
\label{fig:xy_moves}
\end{figure}

\medskip
We denote the image of $(a'_1,b'_1),(a'_2,b'_2),(a'_3,b'_3)$ by $(a_1,b_1),(a_2,b_2),(a_3,b_3),$ 
\[
T_1: (a'_1,b'_1),(a'_2,b'_2),(a'_3,b'_3)\Longrightarrow  (a_1,b_1),(a_2,b_2),(a_3,b_3),
\]
and
\[
T_1: (s'_i,0),(t'_i,0),(u'_i,0)\Longrightarrow  (s_i,0),(t_i,0),(u_i,0) \quad (1\leq i\leq m).
\]

For a point $(x,y),$ where $y\neq 1,2$ the lines through $(x,y)$ and the points $(0,1),(0,2),(1,1)$ will intersect
the $x$ axis in coordinates $u=\frac{x}{1-y},$  $t=\frac{2x}{2-y}$ and $s=\frac{x-y}{1-y}$ (Figure \ref{fig:xy_moves}).
If $(x,y)$ is in $T_1(B)$ as a midpoint of a 3-branch connecting  $(0,1),(0,2),(1,1)$ to $(s_i,t_i,u_i)$ for some $1\leq i\leq m,$ then there is also a 3-branch between  $(a_1,b_1),(a_2,b_2),(a_3,b_3)$ and  $(s_i,t_i,u_i).$ 
Since the three lines of this 3-branch 

\begin{enumerate}
    \item $\ell_1:$ $-b_1X+(a_1-u_i)Y+u_ib_1=0,$
    \item $\ell_2:$ $-b_2X+(a_2-t_i)Y+t_ib_2=0,$
    \item $\ell_3:$ $-b_3X+(a_3-s_i)Y+s_ib_3=0,$
\end{enumerate}
are concurrent, we have the following identity for many $i$

\begin{equation*}
    \begin{vmatrix}
     b_1 & a_{1}-u_i & b_1u_i\\ 
     b_2 & a_{2}-t_i & b_2t_i\\
     b_3 & a_{3}-s_i & b_3s_i 
\end{vmatrix}=0.
\end{equation*}
We want to show that the $(x,y)$ solutions, the zero set of the polynomial below, in equation (\ref{curve}) -- the two-variable polynomial given by the determinant -- is a one-dimensional algebraic set, it is an algebraic curve.
\begin{equation}\label{curve}
\begin{vmatrix}
     (1-y)b_1 & (1-y)a_{1}-x & b_1x\\ 
     (2-y)b_2 & (2-y)a_{2}-2x & 2b_2x\\
     (1-y)b_3 & (1-y)a_{3}-(x-y) & b_3(x-y) 
\end{vmatrix}
=0.
\end{equation}
 Even from the geometric way, we defined the equations, we see that the $y=0$ line is part of the zero set, but we will show that if $y\neq 0,1,2$ then for any given $y$ value, $x$ has at most two possible values. For a fixed $y,$ equation ($\ref{curve}$) is a quadratic equation in $x.$ If it takes three or more different values then all three coefficients of the quadratic equation should be zero. The constant term is

\begin{equation*}
  -b_3y\begin{vmatrix}
     (1-y)b_1 & (1-y)a_{1}\\ 
     (2-y)b_2 & (2-y)a_{2} 
\end{vmatrix}
\end{equation*}
which is zero only if $b_1a_2=a_1b_2.$ The two points, $(a_1,b_1)$ and $(a_2,b_2)$ are on a line through the origin, i.e. there is a $c\neq 0$ such that $(a_1,b_1)=(ca_2,cb_2).$  

The coefficient of the quadratic term is 

\begin{equation*}
  b_1\begin{vmatrix}
     (2-y)b_2 & -2\\
     (1-y)b_3 & -1 
\end{vmatrix}
-2b_2\begin{vmatrix}
     (1-y)b_1 & -1\\
     (1-y)b_3 & -1 
\end{vmatrix}
+b_3\begin{vmatrix}
     (1-y)b_1 & -1\\
     (2-y)b_2 & -2
\end{vmatrix}
=-yb_2(b_1-b_3)
\end{equation*}
which is zero only if $b_1=b_3.$ 

If we just want to see the proof of Theorem \ref{cor:elekes}, then   at this point we are done, since our selection of the two triples of points from the convex point set doesn't allow such accidents. For the proof of Lemma \ref{main3} let's complete the calculations of coefficients. We will see that there are pairs of triples where the zero set of the polynomial in (\ref{curve}) is a two-dimensional set.

The coefficient of the linear term is a bit harder to calculate; it is 

\[
y^2 b_1 b_2 - y a_3 b_1 b_2 (y - 1) - 2 y b_1 b_3 (y - 1) - 2 y b_2 b_3 + y^2 b_2 b_3 + y a_1 b_2 b_3 (y - 1).
\]
Assuming that the other two coefficients are zero, i.e. $b_1=b_3$ and $b_1a_2=a_1b_2,$ this coefficient is

\[
y(y-1)b_2b_3(a_1-a_3-2c+2),
\]
which is zero only if $a_3=a_1-2c+2.$

\medskip
\noindent
If the coordinates of the three points satisfy the equations 
\begin{equation}\label{sim_points}
(a_1,b_1)=(ca_2,cb_2),\quad
b_1=b_3,\quad
a_3=a_1-2c+2
\end{equation}
then, and only then, the zero set of the polynomial in (\ref{curve}) is a two-dimensional set. See for example Figure \ref{fig:two_trip}, where we set the three points as
$(2.5,1.5),(5,3)$ and $(3.5,1.5).$

\begin{figure}[h!]
\centering
\includegraphics[width=14cm]{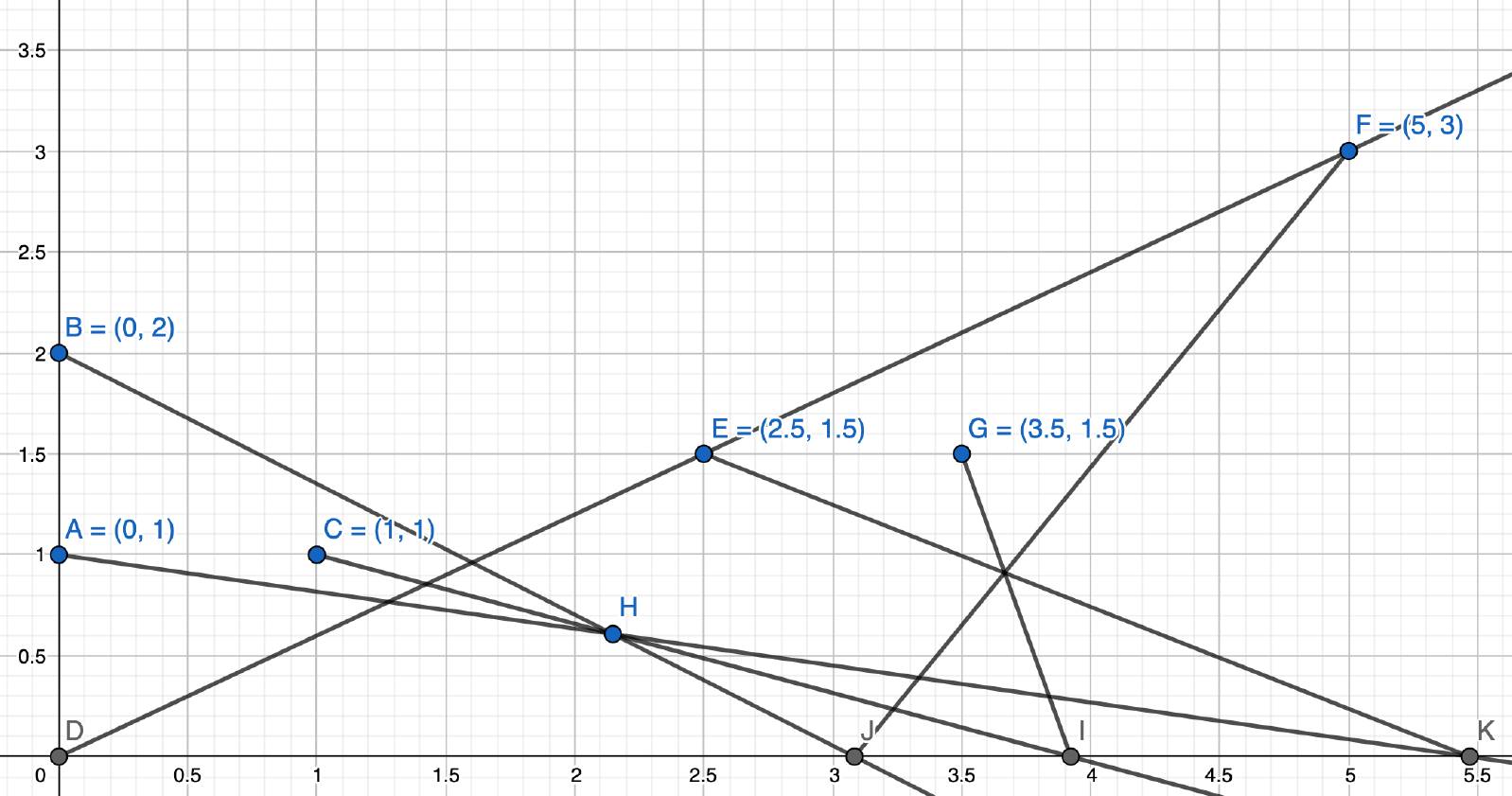}
\caption{With the selected coordinates the three intervals $\overline{EK},\overline{GI}$ and $\overline{FJ}$ are concurrent for any position of $H.$ }
\label{fig:two_trip}
\end{figure}

If all three conditions of (\ref{sim_points}) are satisfied then $(0,1),(0,2),(1,1)$ and $(a_1,b_1),(a_2,b_2),(a_3,b_3)$ are perspective from the $x$-axis. Two common intersection points on the $x$-axis are easy to find from the first two conditions; these are $x=0$ and $x=\infty$. The line of $(0,2)$ and $(1,1)$ intersects at the point $(2,0).$ The equation of the line through $(a_2,b_2),(a_3,b_3)$ is
\[
Y-b_2=\frac{b_2-b_3}{a_2-a_3}(X-a_2).
\]
After substituting the values from (\ref{sim_points}), one can check that this line intersects the $x$ axis at $x=2$ as well. 
For completeness, we include a simple calculation.
Setting $Y=0$ and using the identities in (\ref{sim_points}) we have

\begin{align*}
  b_2(a_3-a_2) & = (b_2-b_3)(X-a_2),\\
  b_2a_3-b_3a_2 & = (b_2-b_3)X,  \\
  b_2a_3-b_1a_2 & = (b_2-b_1)X,  \\
  b_2a_3-b_2a_1 & = b_2(1-c)X,  \\
  a_3-a_1 & = (1-c)X,  \\
  a_1-2c+2 - a_1 & = (1-c)X,  \\
  2 & =X.
\end{align*}

By the assumptions of Lemma \ref{main3}, $A$ does not contain two triples perspective from $C,$ so the equation in (\ref{curve}) defines an algebraic curve. After factoring out the $y=0$ line we were left with a degree two polynomial in two variables 
\begin{align*}
     x^2 b_2 ( b_3 -  b_2) + x \big(y b_1 b_2 -  a_3 b_1 b_2 (y - 1) - 2  b_1 b_3 (y - 1) - 2  b_2 b_3 + y b_2 b_3 +  a_1 b_2 b_3 (y - 1)\big)\\
     +\frac{b_3}{4}(a_1b_2-a_2b_1)\big((2y-3)^2-1\big)= 0,
\end{align*}
which defines a (possibly degenerate) conic, so  at least $c_3\delta^{10}N$ points of $B$ are on a conic.

\qed

\subsection{Few directions in a convex point set}\label{directions}
Let's prove Theorem \ref{cor:elekes} first, and then we prove a stronger statement.

\medskip
\begin{proof}[of Theorem \ref{cor:elekes}]:
The convex point set can be separated into two sets $A'$ and $A''$ by a line $\ell$ such that out of the $cn^2$ point pairs at least $\frac{c^2}{2}n^2=c_1n^2$ have one point in $A'$ and the other one in $A''.$
Let's apply a projective transformation which moves the line of infinity to the real plane, to a line $\ell'$ parallel to $\ell,$ which keeps the points of $A$ in convex position on the same halfplane determined by $\ell'$. The set $C$ is defined by the points of directions of the $c_1n^2$ point pairs in $A,$ one point in $A'$ and the other is in $A''.$ From the conditions of the theorem, we have $|C|\leq Dn.$ If, say, $A''$ is closer to $\ell'$ then applying Lemma \ref{main3} with $B:=A''$ and $A:=A',$  shows that at least $c_2n$ points of $A''$ are on a conic, where $c_2>0$ depends on $c$ only. 
\qed
\end{proof}

\medskip
After choosing the separating line $\ell,$ we can select the projective transformation such that either $A'$ or $A''$ are the pointsets closer to $\ell'.$ Because of that, we know that both $A'$ and $A''$ contain many points from conics. However, we don't know whether the two conics are the same or not. The following theorem, one of our main results, addresses this question.

\begin{theorem}\label{one_conic}
For any $D>0$ and  $c>0$ there is a constant, $\delta>0$ and a threshold $n_0=n_0(c,D)$ so the following holds. Let $A$ be a set of $n$ points in the plane in convex position, and $E\subset A\times A$ is a set of point pairs, which determine at most $Dn$ distinct directions. If $|A|\geq n_0$ and $|E|\geq cn^2$, then there is an $H\subset E$ such that $|H|\geq \delta n^2$ and the points of $H$ are on a conic. 
\end{theorem}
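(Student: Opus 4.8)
The plan is to pass to the collinear–triple setting, apply Lemma~\ref{main3} twice — once to each ``side'' of the convex set $A$ — and then use the over‑determined, Pascal‑type nature of the resulting configuration to force the two conics one obtains to coincide.

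First I would repeat the opening of the proof of Theorem~\ref{cor:elekes}: pick a line $\ell$ splitting $A$ into $A'$ and $A''$ so that at least $\frac{c^{2}}{2}n^{2}$ pairs of $E$ cross $\ell$, and apply a projective transformation carrying the line at infinity onto a line $\ell'$ in the affine plane, parallel to $\ell$, with $A''$ the part closer to $\ell'$. The crossing pairs of $E$ become collinear triples $(a,b,c)$ with $a\in A'$, $b\in A''$ and $c\in C$, where $C\subseteq\ell'$ is the set of at most $Dn$ direction points, and on the line of each triple $A''$ lies between $A'$ and $C$. I would then clean the system: repeatedly delete a point of $A'\cup A''$ incident to fewer than a small constant times $n$ of the surviving triples; since every point lies on at most $n$ triples this costs only a constant fraction of the triples, so at least $\delta_{0}n^{2}$ triples survive ($\delta_{0}=\delta_{0}(c)$), each surviving point being incident to at least a constant times $n$ of them. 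All hypotheses of Lemma~\ref{main3} hold for this system (after the obvious normalisation, the three sets having sizes that are constant multiples of $n$): $C$ is collinear and misses the convex hulls of $A'$ and $A''$; $A'$ is in convex position, hence has no three collinear points and, as remarked before Lemma~\ref{main3}, no two similar triples; and a chord of the convex polygon $A$ with both endpoints in the arc $A'$ has the complementary arc $A''$ on one side of it, so no line of $A'$ meets the convex hull of $A''$. Lemma~\ref{main3} with outer set $A'$ and middle set $A''$ then produces a conic $\mathcal{C}$ and a set $B_{0}\subseteq\mathcal{C}\cap A''$ with $|B_{0}|\ge\gamma_{1}n$; since $A''$ is in convex position, for $n$ large $\mathcal{C}$ cannot be a line or a pair of lines, so $\mathcal{C}$ is a genuine conic, and because each point of $B_{0}$ survived the cleaning there are at least a constant times $n^{2}$ surviving triples with middle point in $B_{0}$.

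Next I would re‑run the argument with $A'$ playing the role of the middle set. The key point is that a suitable projective transformation makes this legitimate: send to infinity a line $m$ parallel to $\ell'$ lying strictly between $\ell'$ and $A''$ — it exists because the convex hull of $A''$ misses $\ell'$. In the new chart, on the line of every triple the point $a\in A'$ lies between $b$ and $c$, so $A'$ is the middle set and $B_{0}$, $C$ are the outer sets; $C$ is still collinear, $B_{0}$ is still in convex position (so no three collinear, no similar triples), $m$ meets none of the relevant convex hulls so everything stays convex, and by the same ``chord of an arc'' remark applied to $A''\supseteq B_{0}$ no line of $B_{0}$ meets the convex hull of $A'$. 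After one more cleaning step on the subsystem of triples whose middle point lies in $B_{0}$, Lemma~\ref{main3} with outer set $B_{0}$, middle set $A'$ and collinear set $C$ gives a conic $\mathcal{C}'$ and a set $A'_{0}\subseteq\mathcal{C}'\cap A'$ with $|A'_{0}|\ge\gamma_{2}n$, each point of $A'_{0}$ incident to at least a constant times $n$ triples with middle point in $B_{0}$.

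The hard part is to prove $\mathcal{C}'=\mathcal{C}$, i.e. $A'_{0}\subseteq\mathcal{C}$. By its construction $\mathcal{C}'$ is the ``valid midpoint locus'' of a pair of triples $U,U'$ of points of the outer set $B_{0}$, and $U\cup U'\subseteq B_{0}\subseteq\mathcal{C}$; the same determinant computation as in the proof of Lemma~\ref{main3} describes this locus explicitly. What I would try to show is that the extra hypothesis $U\cup U'\subseteq\mathcal{C}$ forces this locus to be $\mathcal{C}$ itself — an over‑determination of exactly the flavour of Figure~\ref{fig:Pascal}, where the collinear triple needed for Pascal's theorem is supplied by the line $\ell'$ of $C$. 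Since $U\cup U'$ would then be six points lying on both $\mathcal{C}'$ and the genuine conic $\mathcal{C}$ (hence no four of them collinear), and a conic is determined by any five of its points, this yields $\mathcal{C}'=\mathcal{C}$; making this identification precise — or, failing a clean direct argument, extracting via the Kővári–Sós–Turán theorem \cite{KST} three outer triples with a linear number of common neighbours and playing the three resulting conics (which already share the $\ge\gamma_{1}n$ midpoints) against each other — is the step I expect to be the main obstacle. Granting $\mathcal{C}'=\mathcal{C}$: the conic $\mathcal{C}$ contains both $A'_{0}$ and $B_{0}$, there are at least a constant times $n^{2}$ surviving triples $(a,b,c)$ with $a\in A'_{0}$ and $b\in B_{0}$, and each corresponds to a distinct pair $(a,b)\in E$ with both points on $\mathcal{C}$; taking $H$ to be this set of pairs and $\delta$ the resulting constant (polynomial in $c,D,\gamma_{1},\gamma_{2}$) finishes the proof.
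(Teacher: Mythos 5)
Your reduction to Lemma \ref{main3} and the double application of it --- first with $A''$ as the middle set, then, after sending a different line to infinity, with $A'$ as the middle set --- is exactly the route the paper takes, including the degree--cleaning step that guarantees quadratically many surviving pairs between the two conic pieces. The difficulty you correctly isolate, proving $\mathcal{C}'=\mathcal{C}$, is however left unproved in your write-up: you state the over-determination heuristic (``the valid midpoint locus of two triples lying on $\mathcal{C}$ should be $\mathcal{C}$ itself'') but concede you cannot make it precise, and the K\H{o}v\'ari--S\'os--Tur\'an fallback is likewise only a sketch. This is the one genuinely nontrivial step of the theorem beyond Lemma \ref{main3}, so as it stands the proposal has a real gap rather than a routine omission. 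Note in particular that the configuration of two 3-branches sharing a triple of points on the line is not literally a tic-tac-toe, so the converse of Pascal's theorem does not apply off the shelf to the locus you describe.

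The paper closes this gap with an external tool: the Elekes--Szab\'o theorem (Theorem \ref{E-Sz}), applied to the three curves $H_1$ (the conic through a positive fraction of $A''$), $H_2$ (the conic through a positive fraction of $A'$) and the line carrying the direction points, which together support $\Omega(n^2)$ collinear triples. That theorem forces the union of the three curves to be a cubic, and since the line already contributes degree one, the two conics must coincide. (Irreducibility of the conics is not a problem in your setting: a degenerate conic meets a set in convex position in at most four points, which you essentially observed; the paper instead falls back on Elekes' result \cite{El} for that case.) So either import Theorem \ref{E-Sz} at the point where you currently wave at Pascal, or actually carry out the identification of the two loci; without one of these the proof is incomplete.
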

\begin{proof}
We follow the previous proof first with the additional step: before applying Lemma \ref{main3}, we remove points from $A''$ (let's assume that $A''$ is closer to $\ell'$) which contribute in few directions with $A'$ in $E$. The set $E'\subset E$ is defined as $E'=E\cap A'\times A''.$ By the previous calculations $|E'|\geq c_1n^2.$ We can consider $E'$ as the edge set of the bipartite graph $G(A',A'').$ Let's remove the vertices from $A''$ which have degree less than $\frac{c_1}{2}n.$ This reduced set is denoted by $A''',$ and $E''=E\cap A'\times A'''.$ We apply Lemma \ref{main3} with parameters $A:=A',$ $B:=A''',$ and $C$ as the points of directions on $\ell'$ (also denoted by $C$).
Now we have a set $A^*\subset A''',$ which is large, $|A^*|\geq c_3n,$ its points are on a conic, $H_1,$ and 
$$|E\cap A'\times A^*|\geq \frac{c_1}{2}n|A^*| =c_4n^2$$ for some constants $c_3,c_4>0$ depending on $c$ and $D$ only.
We keep the points of $A'$ and $A^*,$ and move back $\ell'$ to the infinity and back on the ''other side`` so now $\ell''$ is closer to $A".$
Let's apply Lemma \ref{main3} again, now with $A:=A^*,$ $B:=A'$ and $C$ is the set of direction points on the new $\ell''.$
It follows that there is a $c_5>0$ such that there is a set of points, $A^{**}\subset A'$ which is on a conic, $H_2,$ and $|E\cap A^*\times A^{**}|\geq c_5n^2.$
Now we have two conics and a line containing many collinear triples. 
This answers Elekes' related question, cited after Conjecture \ref{Elekes6} already, but we can go a bit further using a result of Elekes and Szab\'o.

We use the following theorem to show that $H_1$ and $H_2$ are the same conics, which completes our proof.

\begin{theorem}[Elekes-Szab\'o \cite{ESz}]\label{E-Sz}
For every $c > 0$ and positive integer $d$ there exist $\rho = \rho(c, d)$
and $n_0 = n_0(c, d)$ with the following property. Let $\Gamma_1,\Gamma_2,\Gamma_3$ be (not necessarily
distinct) irreducible algebraic curves of degree at most $d$ in the plane $R^2.$ Assume
that $n > n_0$ and
\begin{enumerate}
    \item  no two $\Gamma_i$ are identical straight lines;
    \item  $H_i \subset \Gamma_i$ with $|H_i| \leq n \quad (i = 1, 2, 3)$;
    \item The number of collinear triples $a,b,c$ with $a\in H_1, b\in H_2, c\in H_3$ is at least $cn^2.$
\end{enumerate}

Then $\Gamma_1\cup\Gamma_2\cup\Gamma_3$ is a cubic.
\end{theorem}
In the theorem, it is required that the three curves, in our case $H_1, H_2$ and $\ell'',$ are irreducible. We can suppose that because if $H_1$ or $H_2$ contains a linear component, then by the  result of Elekes we mentioned earlier in the introduction of this section, we had three lines with some $c^*n^2$ collinear triples proving Theorem \ref{one_conic} (Elekes' result is in \cite{El}). 

\qed
\end{proof}

\medskip

\section{Open problems}
Many of the related conjectures of Erd\H{o}s and Elekes  remain open. Here we list a few questions where the tools we used in this paper might be useful.
\begin{enumerate}
    \item In the proofs, we heavily used the fact that the pointsets had a natural ordering. What happens if we consider arbitrary sets in the plane? Do the statements of Theorem \ref{main1} and \ref{main2} still hold?
    \item Sets with few directions were investigated extensively in affine Galois planes (\cite{ff1,ff2,ff3,ff4,ff5}). Under what conditions can we conclude that Conjecture \ref{Elekes6} holds in planes over finite fields?
    \item Scott's problem on the minimum number of directions was solved in 3-space by Pach, Pinchasi, and Sharir in \cite{PPS}. What structural results can one guarantee if the number of directions determined by two sets in space is very small?
    \item Let us suppose that four mutually avoiding $n$-elements pointsets $A,B,C$ and $D$ have $cn^2$ collinear four-tuples $(a,b,c,d)$ where $a\in A,b\in B,c\in C$ and $d\in D.$ Can we conclude that then at least 5 points are collinear in $A$? (or in any other set). This problem is closely related to a question of Erd\H{o}s \cite{Er}; is it true that if $n$ points contain $cn^2$ collinear 4-tuples, then it should contain five collinear points? For some lower bounds and related results, see \cite{SSs}.
    \item Are there better bounds for the Brown-Erd\H{o}s-S\'{o}s conjecture for collinear triples than what we had in Theorem \ref{main2}? Maybe -- similar to the case with triples from finite groups -- there is a constant, $c>0,$ such that the following holds. For any $k\in \mathbb{N}$ and $\delta>0$, there is a threshold, $n_0=n_0(k,\delta)$ such that if $n\geq n_0$ and there are at least $\delta n^2$ collinear triples among $n$ points then there is a  $(k,ck^2)$ configuration among the triples.
\end{enumerate}

\section{Acknowledgments}
J.S.'s research was partly supported by an NSERC Discovery grant and OTKA K 119528 grant. The author is thankful to Lior Gishboliner for pointing out the important reference \cite{GS}. The referees' comments significantly improved the presentation of the results and helped me correct a mistake in the previous version.

\end{document}